\newtheorem{theorem}{Theorem}[section]
\newtheorem*{theorem*}{Theorem}
\newtheorem{lemma}[theorem]{Lemma}
\newtheorem{proposition}[theorem]{Proposition}
\theoremstyle{definition}
\newtheorem{example}[theorem]{Example}
\newtheorem{remark}[theorem]{Remark}
\newcommand{\comment}[1]{}
\newcommand{\tray}[1][l]{\ensuremath{path(#1)}}
\newcommand{\cyc}[1][l]{\ensuremath{cyc(#1)}}
\newcommand{\tell}[1][l]{\ensuremath{tellis(#1)}}
\newcommand{\pyr}[3][l]{\ensuremath{pyramid(#1,#2,#3)}}
\newcommand{\apyr}[1][l]{\ensuremath{pyramid(#1)}}
\newcommand{\cyl}[1][l]{\ensuremath{cylinder(#1)}}
\newcommand{\chin}[2][n]{\ensuremath{ch[#1;#2]}}
\definecolor {processblue}{cmyk}{0.96,0,0,0}
\newcommand{\commentd}[1]{}
\renewcommand{\emph}[1]{\textbf{#1}}
\title{Polychrony as Chinampas}
\author{{Eric Dolores-Cuenca}$^{1}$\and José Antonio Arciniega-Nevárez $^{2,}$*\and Anh Nguyen $^{3}$\and Amanda Yitong Zou $^{4}$\and Luke Van Popering $^{5}$\and Nathan Crock $^{5}$\and Gordon Erlebacher $^{6}$\and Jose L. Mendoza-Cortes $^{7,}$*}
\begin{document}
\maketitle

\begin{flushleft}
{%
$^{1}$ \quad {Yonsei University}, 
Seoul {03722}, 
Republic of Korea; eric.rubiel@yonsei.ac.kr\\
$^{2}$ \quad División de Ingenierías, Campus Guanajuato, Universidad de Guanajuato, {36000 Guanajuato, Mexic}\\
$^{3}$ \quad Drexel University,  Philadelphia, PA {19104}, USA; acn64@drexel.edu\\ 
$^{4}$ \quad University of Michigan, Ann Arbor, MI {48104}, USA; ytzou@umich.edu\\
$^{5}$ \quad {NewSci Labs}, 
Department of Scientific Computing, Florida State University, Tallahassee, FL {32306}, USA; luke@emelex.ai (L.V.P.); ncrock@fsu.edu (N.C.)\\
$^{6}$ \quad Department of Scientific Computing, Florida State University, Tallahassee, FL {32306}, USA; gerlebacher@fsu.edu\\
$^{7}$ \quad Department of Chemical Engineering \& Materials Science, Michigan State University, MI {48824}, USA}
\end{flushleft}

\begin{abstract}{In this paper, we study the flow of signals through linear paths with the  nonlinear condition that a node emits a signal when it receives external stimuli or when two incoming signals from other nodes arrive coincidentally with a combined amplitude above a fixed threshold.  Sets of such nodes form a polychrony group and can sometimes lead to cascades. In the context of this work, cascades are polychrony groups in which the number of nodes activated as a consequence of other nodes is greater than the number of externally activated nodes. The difference between these two numbers is the so-called profit.  
 Given the initial conditions, we predict the conditions for a vertex to activate at a prescribed time and provide an algorithm to efficiently reconstruct a cascade. 
We develop a dictionary between polychrony groups and graph theory. We call the graph corresponding to a cascade a chinampa. This link leads to a topological classification of chinampas. We enumerate the chinampas of profits zero and one and the description of a family of chinampas isomorphic to a family of partially ordered sets, which implies that the enumeration problem of this family is equivalent to computing the Stanley-order polynomials of those partially ordered sets.}
\end{abstract}

\section{Introduction}

Networks directly or indirectly impact many aspects of our lives via numerous modalities, including the internet, telecommunications, social media, the brain, and our bodies. These networks can be modeled through signal flow graphs, or directed graphs in which nodes represent system variables and the edges represent  functional connections between pairs of nodes.~In this paper, we investigate particular examples of nonlinear signal flow graphs, \footnote{({To allow reproducible results, we have made the source code} 
available at \url{https://github.com/mendozacortesgroup/chinampas/}, {accessed on 2023/03/03.})} 
(see Section \ref{sec:signal-flow}) in which some external stimuli are applied to the vertices of the graph, triggering a chain reaction on these and other vertices. A cascade refers to those chain reactions in which the number of external stimuli is smaller than the number of reactions generated within the chain. The time duration of these stimuli plays an important role, rendering the study of cascades nonlinear.

A vertex subjected to an external stimulus or triggered indirectly as a consequence of reactions to the stimuli of others is considered activated. In this context, we seek answers to the following questions: 

\begin{itemize}
\item Is a given vertex activated at a particular time?
\item Can we reconstruct all the vertices that will change their states to activated?
\end{itemize}

Neuronal networks in which the flow of signals form time-locked patterns define the polychrony groups \cite{izhikevich2006polychronization}. The word polychrony is derived from the Greek words ``poly'' (i.e., many) and ``chronous'' (i.e., time).  

Most of the literature focuses on linear signal flow graphs, where the edges represent matrix operations. However,  studying polychrony groups requires a language to treat the nonlinear case. As part of our methodology, we adopt the language of graph theory to model signal flow networks. A polychronous group without redundant information is encoded in a graph called a chinampa. The study of chinampas is described in Sections \ref{sec:base-activation}--\ref{sec:cascades-cellular}. \linebreak Section \ref{sec:base-activation} describes the signal flow networks from the point of view of graph theory. In Section \ref{sec:chi}, we provide a topological characterization of chinampas (see Theorem \ref{thorem:TCC}). Section \ref{sec:cascades-cellular} explains the relationship of our work with cellular automata.

 We introduce the concept of profit, a measure of how many vertices activate in response to external stimuli. In Section \ref{sec:desc.chinampa}, we give formulae for the number of pyramids in a chinampa of profits zero and one. In  Section \ref{sec:alg}, we provide the code to answer both of our target questions. The algorithm we implemented is available at \url{https://github.com/mendozacortesgroup/chinampas/} ({accessed on 2023/03/03}). Note that the algorithm works with an input network of the form $1\rightarrow 2,\cdots,\rightarrow n$ and when the input network is a tree. Our code is optimal compared with the state of the art, as explained in the conclusions.

The study of chinampas of a profit greater than one is more difficult. In Section \ref{section:biseq}, we describe a family of chinampas whose enumeration problem is equivalent to computing the order polynomial of some posets. The order polynomial counts the labeling maps from a poset to chains $1<2<\cdots<n$, and the study of order polynomials is an active area of research in enumerative combinatorics. Our work sets the basis for the study of polychrony groups in combinatorics.

\section{Nonlinear Signal Flow Graphs}\label{sec:signal-flow}

In this article, we shall study a kind of \emph{{nonlinear signal flow graph}} 
called a directed graph. We think that a signal propagates throughout an edge by following its orientation. When several signals reach a vertex simultaneously, a built-in condition called the \emph{threshold} determines whether it will react by firing signals. The concept of the signal flow graphs was developed by Samuel Mason and Claude Shannon \cite{nonlineargraphs, sha}. If the condition is linear in the intensity of the input signals, then the graphs are called \emph{{linear signal flow graphs}}. 

 We study the following \emph{{nonlinear}} condition of a signal flow graph:

\begin{itemize}
\item Every signal has an intensity of one.
\item Every vertex has a threshold intensity of two.
\item If a vertex coincidentally receives signals of an intensity higher or equal to the threshold, then the vertex fires a signal through each of its outgoing edges.
\end{itemize}

Nonlinear signal flow graphs are used to study circulatory regulation \cite{hearth}, to design automatization of nonlinear data converters \cite{symb}, to compare system-level and spice-level static nonlinear circuits \cite{nonlinearsfg}, to build models for DC-DC buck-boost converters \cite{twolinears}, and to analyze the problem of inverting a system consisting of nonlinear and time-varying components \cite{inv}. The nonlinear condition of our signal flow graphs is an abstraction of neural spikes. Neural spikes are used in cognitive computing to develop hardware that emulates the human nervous system \cite{spikebased, slow, neuromorphic, neurosystems, pneurosystems}, to implement robust chaotic communication~\cite{chaos}, and to power efficient channel coding \cite{pulse}. The link between neural spikes and pulse position modulation is explained in \cite{leaky}. 

In particular, we are interested in polychrony groups defined as a group of \emph{{primary neurons}} (vertices) that fire at specific times, leading to \emph{secondary neurons} firing. As a result, a \emph{cascade} occurs when the number of primary neurons is below the number of secondary neurons (see \cite{izhikevich2006polychronization} for more details). 

We study the general phenomena of polychronization of nonlinear signal flow graphs of the form 

 \begin{equation*}
 1\rightarrow 2\rightarrow 3\rightarrow\cdots \rightarrow n.
\end{equation*}
 
Our initial objective is to characterize the polychrony groups that lead to cascades. A second goal is to count the families of the cascades.  We also explain the algebraic structure of cascades using cellular automata theory. Our final goal is to develop an algorithm that can answer certain types of queries without the need to compute all possible interactions. An example of such a query might be establishing which neuron (vertex) will be activated at some future time.

\section{Base and Activation Diagrams}\label{sec:base-activation}

In this section, we introduce the notion of a covering graph to represent the flow of signals in a network. As a part of our methodology, we translate the problem of characterizing our nonlinear signal flow graphs into the problem of color-covering graphs. 

\subsection{Base Diagram}\label{sub:base-diagram}

Consider the set of nonnegative integers $\mathbb{N}$. For a directed graph $A$, we denote the  set of vertices and edges of $A$ as $V(A)$ and $E(A)$, respectively. We define a \emph{base diagram} of $A$ as the pair $(B,p)$ where $B$ is a directed graph with vertices $V(B) = V\times \mathbb{N}$ and, given any edge $m\rightarrow n$ of $E(A)$  labeled by $t\in \mathbb{N}$, we define an edge in $E(B)$ as $(m, i)\rightarrow (n,{i+t})$ between the vertices $(m,i)$ and $(n,{i+t})$ of $V(B)$. The coordinate $i$ in $(m,i)$ indicates the row position of the vertex (see Figure \ref{fig:initialsample}). 

\begin{figure}[H]
\includegraphics[width=7cm]{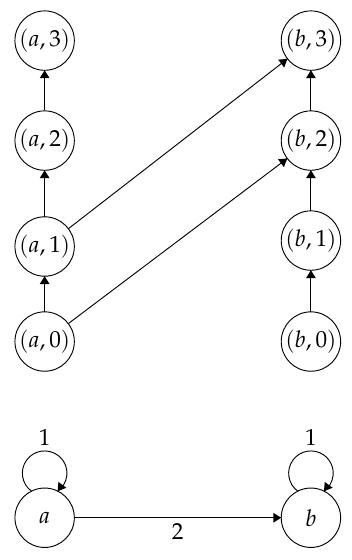}
\caption{In the bottom part of the figure, a simple graph \emph{A} is shown. In the top part, the corresponding base diagram \emph{B} is shown }
\label{fig:initialsample}
 \end{figure}

The function $p:B\rightarrow A$ is the projection defined by $(m,i)\mapsto m$, and the image of every edge  ${(m, i)\rightarrow (n,{i+t})}$ under $p$ defines the edge $m\rightarrow n$ of $E(A)$ with a label $t$.

We are interested in particular types of directed graphs. A \emph{singleton} $u$ is a graph with one vertex and one self-edge with a label $1$. A \emph{path of length $l$}, denoted by $\tray$, is a directed graph with vertices $\{1,2,\cdots, l\}$, where each vertex has a self-edge, while each vertex $i<l$ has one outgoing edge to the vertex $i+1$. Any edge has a label $1$. A \emph{cycle of length $l$}, denoted by $\cyc$, is a path with vertices in the set $\{1,2,\cdots, l\}$, but the vertex $l$ has one outgoing edge to vertex $1$ with a label $1$. In other words, a cycle is a closed path. The graphs $u$, $\tray$, and $\cyc$ have their own base diagrams, which we call
$\tilde{u}$, $\tell$, and $\cyl$, respectively. Figure~\ref{fig:pyramid} shows examples of a path, a cycle, and their respective base diagrams.

\begin{figure}[H]
\begin{tabular}{cc}
\includegraphics[width=0.49\linewidth]{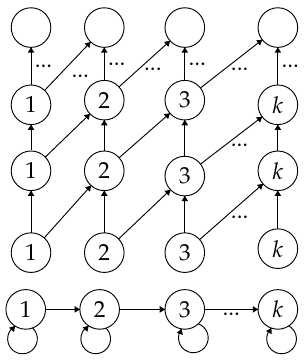}
&\includegraphics[width=0.46\linewidth]{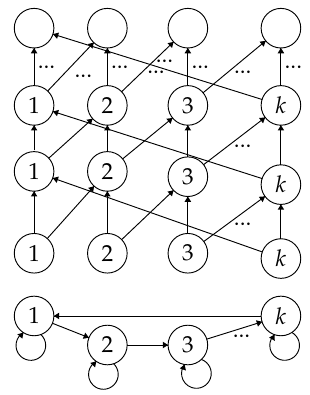}\\
({\bf a})&({\bf b})\\
\end{tabular}
\caption{Directed labeled graphs and their correponding base diagrams. (\textbf{a}) A path(\emph{l}) and its corresponding base diagram \emph{tellis}(\emph{l}). (\textbf{b}) A \emph{cyc}(\emph{l}) and its corresponding base diagram cylinder(\emph{l}).}
\label{fig:pyramid}
\end{figure}

\subsection{Activation Diagram}\label{sec:act}

Given a base diagram, we selected a subset of vertices and called them the \emph{primary vertices}. The \emph{activation diagram} $(B,S)$ is a base diagram $B$ and a subset $S$ of the primary vertices of $V(B)$. A \emph{secondary vertex} with the coordinates $(r,t)$ is a vertex in the base diagram in which each one of $(r,t-1)$ and $(r-1,t-1)$ is either a primary vertex or a secondary vertex. We associated the \emph{activation graph} (i.e., the underlying colored graph in which the primary and secondary vertices are black and the remaining vertices are white) to each activation diagram. To simplify the interpretation of theactivation diagrams, we only drew vertices which were either primary or secondary and avoided the others in the base diagram (see Figure \ref{fig:build}).

\begin{figure}[H]
\includegraphics[scale=0.42]{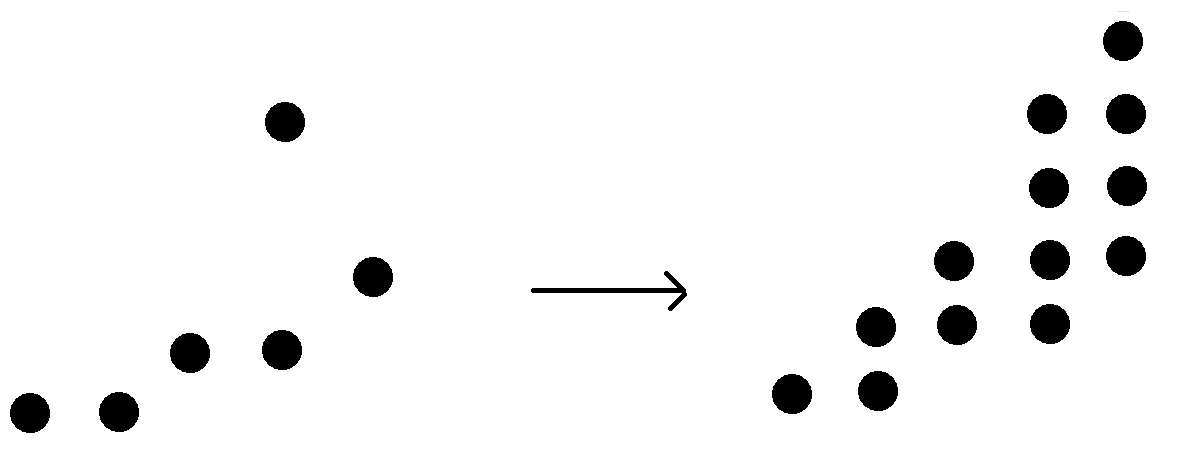}
\caption{(\textbf{left}) An example of an activation diagram. The activation diagram turns into an activation graph. (\textbf{right}) A new black vertex appears in a row of the activation graph due to the  two corresponding black vertices in the previous row in the activation diagram.}\label{fig:build}
\end{figure}

\begin{example}\label{exam:act-pyr}
Consider $P=\tray$ and its corresponding base diagram $B=\tell$, as described before. Fix a vertex $v_i\in P$, and consider $S=\{(v_{i+k},t)\}\subset B$ with $k=0,1,\cdots, l-1$ consecutive vertices with the same time. The set $S$ is determined by the initial vertex $v_i$ of the path $P$, its length $l$, and a fixed time $t\in \mathbb{N}$. We employ a \emph{pyramid} in the activation diagram $(B,S)$ and denote it as $\pyr{i}{t}$. In Figure~\ref{fig:first-base}, the activation diagram $\pyr[3]{i}{t}$ of a path with a length of three is illustrated.
\end{example}

\begin{figure}[H]
\includegraphics[width=9.5 cm]{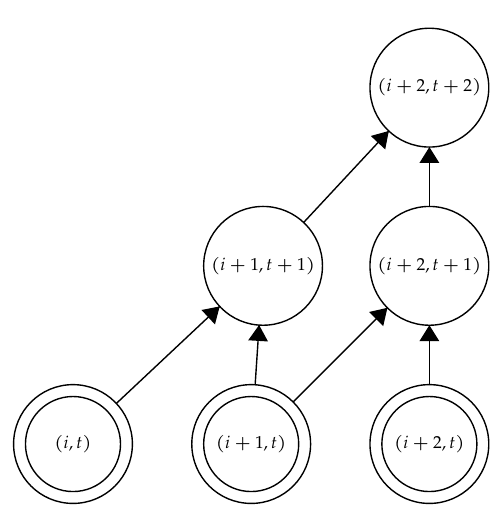}
\caption{A ring (double circle) indicates the primary vertices. There are three secondary vertices, forming a pyramid.}
     \label{fig:first-base}
\end{figure}

\begin{remark}\label{rmk:generate-sec-vert}
Suppose we have an activation diagram in which there is $\pyr{i}{t}$ with an extra primary vertex $(i-1+s,t+s)$ where $0\leq s\leq l-1$. The vertex affects $\pyr{i}{t}$ by activating the vertices in the diagonal $\{(i-1+k,t+k)~|~k=s+1,\cdots, l)\}$. On the other hand, if the extra primary vertex is localized in $(i+l+1,t+s)$, where $0\leq s\leq l-1$, then the vertical line $\{(i+l+1,t+k)~|~k=s+1,\cdots, l\}$ is activated (see Figure \ref{fig:pyr-plus-vertex}).
\end{remark}

\begin{figure}[H]
\begin{tabular}{cc}
\includegraphics[width=0.4\linewidth]{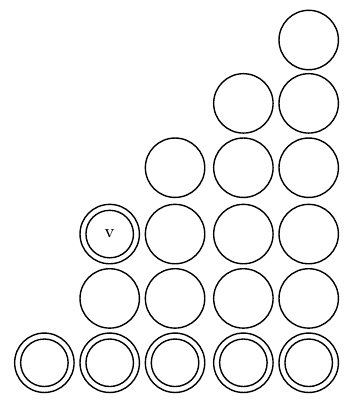}
&\includegraphics[width=0.4\linewidth]{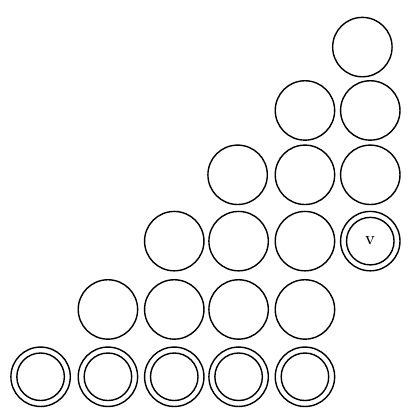}\\
({\bf a})&({\bf b})\\
\end{tabular}
\caption{Effect of adding a primary vertex to a pyramid. (\textbf{a}) If we add an activated vertex \emph{v} to the left side of a pyramid, then the vertices in the same diagonal with times greater than the time of \emph{v} are activated until reaching the top of the pyramid. (\textbf{b}) If we add an activated vertex \emph{v} to the right side of the pyramid, then the vertices in the same column with a time greater than the time of \emph{v} are activated until reaching the top of the pyramid.}
\label{fig:pyr-plus-vertex}
\end{figure}

With a path of secondary vertices from  $(i,t)$ to $(j,s)$, we define a sequence 

\begin{equation*}
(i_0,t_0),(i_1,t_1),\cdots,(i_k,t_k)
\end{equation*}
with the first point $(i_0,t_0)$ $=$ $(i,t)$ and final point $(i_k,t_k)$ $=$ $(j,s)$ such that there is an edge between two consecutive vertices. Here, the edges are considered while ignoring the direction. The edges go from $(i,t)$ to $(i, t+ 1)$ and from $(i,t)$ to $(i-1,t-1)$.

We restricted our study to activation diagrams where every primary vertex contributed to at least one secondary vertex. A \emph{redundant activation diagram} occurs when a primary vertex is also a secondary vertex (see Figure \ref{fig:red}).

\begin{figure}[H]
\includegraphics[width=0.5\linewidth]{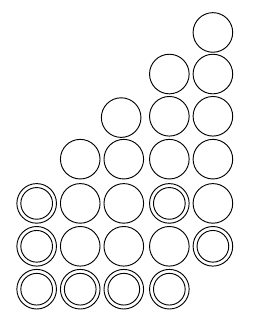}
\caption{In this activation graph, there is a primary vertex which is also a secondary vertex.}\label{fig:red}
\end{figure}

An activation diagram \emph{C} is \emph{{connected}} if the secondary vertices of the corresponding activation graph form a connected, undirected graph. As an example,  Figures \ref{fig:build} and \ref{fig:first-base} are connected activation diagrams. Figure \ref{fig:first-connected} is not a connected activation diagram. 

\begin{figure}[H]
\includegraphics[width=0.3\linewidth]{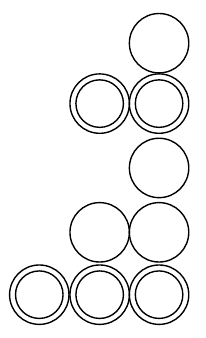}
\caption{An example of a non-connected activation diagram. There is no edge between the pyramid below and the pyramid above.}
     \label{fig:first-connected}
\end{figure}

\section{Chinampas}\label{sec:chi}

Given an activation diagram, the \emph{profit} is equal to the number of secondary vertices minus the number of primary vertices. The profit measures the maximum number of extra vertices which are activated as a consequence of the topology of the graph. A connected, non-redundant activation diagram is a \emph{chinampa} if its profit is greater than or equal to zero. An example of a chinampa is illustrated in Figure \ref{fig:sample}. The simplest chinampa, in the sense that it involves the least number of vertices, is $\pyr[3]{i}{t}$, as shown in Figure \ref{fig:first-base}. The name is due to the similarity of the figures with  an ancestral Mexican agricultural technique that uses soil to grow crops on a lake. We imagine that chinampas have crops above the soil, and underneath, there are roots.

\begin{figure}[H]
\includegraphics[width=0.37\linewidth]{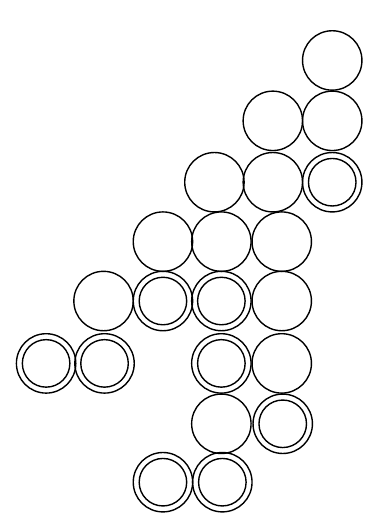}
\caption{Example of a chinampa.}\label{fig:sample}
\end{figure}

The profit defines a function from the set of chinampas to nonnegative integers. We denote with $profit(C)$ the profit of a chinampa \emph{C}.

\subsection*{The Topological Description of a Chinampa}\label{sec:gr}

We describe  chinampas over a $\tray$. For this goal, we will give the decomposition of a chinampa into pyramids. Remember that we defined $\pyr{i}{t}$ as an activation diagram $(B,S)$ where the primary vertices \emph{S}  are consecutive and have the same time. We extend the definition of a pyramid to the activation diagram $(B,S)$, in which \emph{S} contains the secondary vertices.  Thus, in chinampa \emph{C} of Figure \ref{fig:twopiramid}, we have two pyramids $\pyr[3]{i}{t}$ and $\pyr[3]{i+2}{t+2}$, where the vertex $(i+2,t+2)$ is a secondary vertex.

\begin{figure}[H]
\includegraphics[width=0.4\linewidth]{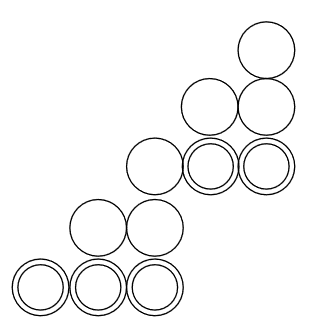}
\caption{$\pyr[3]{i}{t}$ and $\pyr[3]{i+2}{t+2}$ with $(i+2,t+2)$ as a secondary vertex, forming a chinampa.}\label{fig:twopiramid}
\end{figure}

Note that a pyramid has only one secondary vertex at the top, which we call the \emph{pyramidion}. If $P_1=(B,S_1)$ and $P_2=(B,S_2)$ are pyramids in a chinampa, then we say that pyramid $P_2$ is \emph{stacking into} pyramid $P_1$ if the pyramidion of $P_2$ is a secondary vertex of $P_1$ other than the pyramidion of $P_1$, and $P_2$ is not contained in $P_1$.

Remark \ref{rmk:generate-sec-vert} shows that the activation diagram of a $\tray$ is of the form $\pyr[k]{i}{t}$, together with the primary vertices at the right or left of $\pyr[k]{i}{t}$. We will see that an activation diagram is a sequence of stacked pyramids:

\begin{remark}\label{rmk:two-pyrs}
Consider two pyramids $\pyr[l_1]{i}{t}$ and $\pyr[l_2]{i+l_1}{t}$. The pair of primary vertices $(i+l_1,t)$ and $(i+l_1+1,t)$ activates $(i+l_1+1,t+1)$, which is not part of any of the two pyramids. Based on Remark~\ref{rmk:generate-sec-vert}, the entire diagonal to which the vertex belongs is activated, as well as the vertical line $\{(i+l_1+1,t+k)\}$ with ${ k\in\{1,\cdots,l_1+1\}}$, and so on. Therefore, we end up with $\pyr[l_1+l_2]{i}{t}$. This argument works for two pyramids: $\pyr[l_1]{i}{t}$ and $\pyr[i+l_1+k]{i}{t}$, separated by activated vertices $\{(i+l_1+j,t)~|~ 1\leq j\leq k \}$. Thus, in such cases, instead of considering several small adjacent pyramids, we always consider only the biggest pyramid that includes the small adjacent pyramids.  
\end{remark}

\begin{proposition}\label{prop:two-pyr}
There exists only one pyramidion with the maximum time in a chinampa. We call this a \emph{spike}.
\end{proposition}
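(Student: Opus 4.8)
The plan is to prove a slightly stronger statement from which the proposition follows at once: if $T$ denotes the maximal time occurring among the vertices of the chinampa, then there is exactly one vertex at time $T$, and it is automatically a pyramidion. Since every pyramidion is a secondary vertex and hence occurs at time at most $T$, the uniqueness of the maximal‑time pyramidion (the spike) is immediate once the top time is shown to carry a single vertex.

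First I would dispose of the easy reductions. Using non‑redundancy in the form ``every primary vertex contributes to the activation of at least one secondary vertex,'' I note that a primary vertex at time $T$ could only help activate vertices at time $T+1$, namely the two vertices directly above and above‑right of it, and these do not exist by maximality of $T$. Hence every vertex at the top time $T$ is secondary, and some secondary vertex attains time $T$. Next, recalling that the only edges join $(i,t)$ to $(i,t+1)$ and to $(i-1,t-1)$, two vertices at the same time are never adjacent; combined with the activation rule this shows that if $(a,T)$ and $(a+1,T)$ were both secondary then $(a+1,T+1)$ would be secondary, contradicting maximality. So the secondary vertices at time $T$ occupy pairwise non‑adjacent positions.

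The heart of the argument is to rule out two distinct secondary vertices $(a,T)$ and $(b,T)$ at the top time by an extremal ``raise the valley'' argument exploiting connectivity. Suppose two exist. Since a chinampa is connected, there is a path of secondary vertices joining them; among all such connecting paths (over all pairs of distinct top vertices) I choose one whose minimum time $m$ is as large as possible, and among those one using as few vertices at time $m$ as possible. If $m=T$ the whole path lies at the single top time, which is impossible since distinct same‑time vertices are non‑adjacent. So $m<T$, and at a vertex $(c,m)$ realizing the minimum the two path‑neighbours must sit at time $m+1$, hence are exactly $(c,m+1)$ and $(c+1,m+1)$. Being secondary they force $(c-1,m),(c,m),(c+1,m)$ all active, whence $(c,m+1)$ and $(c+1,m+1)$ together activate the secondary vertex $(c+1,m+2)$. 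Rerouting the path through $(c,m+1)\to(c+1,m+2)\to(c+1,m+1)$ replaces the time‑$m$ vertex $(c,m)$ by vertices of time at least $m+1$, strictly decreasing the number of time‑$m$ vertices (or raising $m$ outright), contradicting the extremal choice. Hence the top time carries a unique vertex, and this vertex, having nothing above it, is the pyramidion of the maximal pyramid descending from it along its down‑left diagonal in the sense of Remark~\ref{rmk:generate-sec-vert}.

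I expect the delicate point to be the bookkeeping of the extremal argument, in particular guaranteeing that the reroute genuinely improves the chosen quantity: the global minimum time may be attained at several vertices, so I phrase the optimization as ``maximize $m$, then minimize the number of vertices at time $m$,'' and check that the detour through $(c+1,m+2)$ introduces no new time‑$m$ vertex and that, after passing to a simple subpath, this count cannot increase elsewhere. A secondary subtlety is confirming that the resulting unique top vertex really is a pyramidion in the sense used earlier, i.e. that the down‑left diagonal through it, together with the vertices it dominates, assembles into a genuine pyramid having this vertex as its pyramidion.
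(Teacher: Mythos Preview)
Your argument is correct and takes a genuinely different route from the paper's. The paper assumes two pyramidia $(i,t)$ and $(j,t)$ at the maximum time, takes a connecting path of secondary vertices, and then applies Remark~\ref{rmk:two-pyrs} iteratively from the lowest time upward: at each level, adjacent stretches of activated vertices merge into a single larger pyramid, so by the time one reaches height $t$ the entire horizontal segment between $(i,t)$ and $(j,t)$ is activated, whence neither point was a pyramidion. Your proof instead works purely with the activation rule and an extremal ``lift the valley'' argument on connecting paths, never invoking the pyramid-merging remark. What you gain is a self-contained argument that needs nothing beyond the local rule and connectedness; what the paper's approach gains is the stronger geometric conclusion that the whole region between two such points fills in, which it immediately reuses in Remark~\ref{rmk:3-staking}. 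One small point worth making explicit in your write-up: when $m=T-1$ the two neighbours $(c,m+1)$ and $(c+1,m+1)$ already sit adjacently at level $T$, and the vertex $(c+1,m+2)$ you produce lies at time $T+1$, which contradicts maximality of $T$ outright rather than merely the extremal choice of path; your argument covers this case, but saying so avoids any doubt about whether the rerouted path stays inside the chinampa.
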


\begin{proof}
Suppose $(i,t)$ and $(j,t)$ with $i<j$ are pyramidia with the maximum time. Through connectedness, there is a path from $(i,t)$ to $(j,t)$ with only secondary vertices. Each vertex on the path has a time lower than or equal to  $t$. Starting from the vertices with lower times,  we use Remark \ref{rmk:two-pyrs} until we reach those vertices with a time $t$. At each step, we conclude that all vertices above and between those in the path are secondary vertices. Then, those vertices between $(i,t)$ and $(j,t)$ are activated, meaning that  $(i,t)$ and $(j,t)$ are not pyramidia, which is a contradiction.
\end{proof}

\begin{remark}\label{rmk:3-staking}
If $(i,t)$ and $(j,t)$ are activated vertices connected by a path of activated vertices with a time lower than or equal to $t$, then the argument in the proof of Proposition~\ref{prop:two-pyr} shows that both of them are in a pyramid whose pyramidion has a time greater than $t$. As a consequence, if two pyramids $P_1$ and $P_2$ can be stacked under a given $P$, then pyramid $P_1$ cannot be adjacent to $P_2$, since under Remark~\ref{rmk:two-pyrs}, we would instead stack the biggest pyramid, which includes both $P_1$ and $P_2$. The activation diagram of Figure~\ref{fig:treebad} is not a chinampa. Assume  $t$ is the minimum time of the primary vertices. Then, $\pyr[2]{i}{t}$ and $\pyr[2]{i+2}{t}$ are adjacent, and thus they are in $\pyr[4]{i}{t}$, which implies that the middle vertex becomes a secondary vertex, which is contrary to the non-redundancy requirement.
\end{remark}

\begin{figure}[H]
\includegraphics[width=0.4\linewidth]{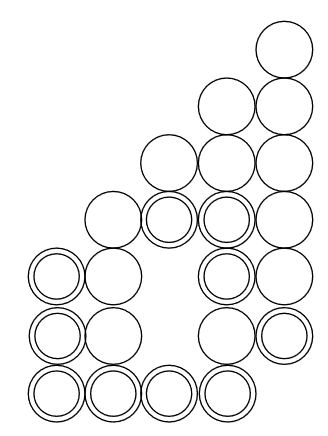}
\caption{Here is an example of an activation diagram which is not a chinampa. The copies of $\pyr[2]{i}{t}$ at the bottom are next to each other, and as a consequence, all vertices in the hole are internally activated.}\label{fig:treebad}
\end{figure}

\begin{remark}\label{rmk:right-construction}
Another consequence of Remark~\ref{rmk:generate-sec-vert} is that a chinampa has no activated vertex located to the right of the spike. If the spike is $(i,t)$, then all activated vertices are of the form $(j,t')$ with $j\leq i$ and $t'<t$. 
\end{remark}

Now, we shall give an order to the pyramids in a chinampa:

\begin{proposition}\label{prop:top-pyr}
 In a chinampa, there exists a unique $\pyr{i}{t}$ with $l\geq 3$ of the maximum time. We call it the \emph{top pyramid}.   
\end{proposition}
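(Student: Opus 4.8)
The plan is to prove existence and uniqueness together, by induction on the number of vertices of the chinampa $C$, peeling the spike off the top whenever it sits strictly above the length-$\geq 3$ structure. Write $(i,t)$ for the spike, the unique pyramidion of maximum time supplied by Proposition~\ref{prop:two-pyr}, and for a secondary vertex $v$ let $\lambda(v)$ be the length of the largest pyramid with pyramidion $v$. This is well defined because pyramids with a common pyramidion are nested triangles; it is at least $2$ because both parents of a secondary vertex are activated, and it is finite because $C$ is finite. Two observations frame the induction. First, every pyramid of length $\geq 3$ has its pyramidion at time $\leq t$, and by Proposition~\ref{prop:two-pyr} the spike is the only pyramidion at time exactly $t$. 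Second, once a pyramidion is fixed, nestedness makes the maximal pyramid above it unique. So it is enough to produce the maximal pyramid at the highest pyramidion whose $\lambda$ is at least $3$.

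If $\lambda(i,t)\geq 3$, the maximal pyramid at the spike has length $\geq 3$ and its pyramidion attains the global maximum time $t$; since no other pyramidion reaches time $t$, it is the unique top pyramid, and the minimal chinampa $\pyr[3]{i}{t}$ is an instance. The substantive case is $\lambda(i,t)=2$. Here I claim the spike has exactly one secondary parent. The profit condition forces at least two secondary vertices --- a single secondary vertex would have two primary parents and profit $\leq -1$ --- so connectedness gives the spike a secondary neighbour; its only candidate neighbours are the two parents $(i,t-1)$ and $(i-1,t-1)$, since the upward neighbours lie at time $>t$ and, by Remark~\ref{rmk:right-construction}, are not activated. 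Hence at least one parent is secondary; and if both were, Remark~\ref{rmk:generate-sec-vert} would activate the entire base $(i-2,t-2),(i-1,t-2),(i,t-2)$, forcing $\lambda(i,t)\geq 3$. Let $p$ be the unique primary parent; by Remark~\ref{rmk:right-construction} its only potential secondary child other than the spike sits at time $t$ or in a column to the right of $i$ and is therefore not activated, so $p$ feeds the spike alone.

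I would then delete $p$ and the spike to form $C'$, and check that $C'$ is a chinampa with fewer vertices. Removing one primary and one secondary leaves the profit unchanged; the spike is a leaf of the secondary graph, so deleting it preserves connectedness; and since $p$ fed only the spike, every surviving primary still activates a surviving secondary, so non-redundancy is maintained. Crucially, neither deleted vertex lies in a pyramid of length $\geq 3$: a primary vertex cannot occupy a non-base position of a pyramid without being internally activated, which would violate non-redundancy, so $p$ could only be a base vertex, and that would push the pyramidion above time $t$; meanwhile the spike's own maximal pyramid has length $2$. Thus $C$ and $C'$ have exactly the same pyramids of length $\geq 3$, and the unique top pyramid of $C'$ given by the induction hypothesis is the top pyramid of $C$.

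The main obstacle is the bookkeeping in this reduction: showing that the spike has precisely one primary parent, that this parent feeds nothing but the spike, and that excising the resulting length-$2$ cap leaves every length-$\geq 3$ pyramid intact. These are exactly the points that rely on Proposition~\ref{prop:two-pyr}, which pins down the spike as the unique top pyramidion and thereby controls its neighbours in the secondary graph, and on Remark~\ref{rmk:right-construction}, which guarantees nothing is activated above the spike or to its right. Once these are secured, the peeling strictly decreases the vertex count and terminates at a chinampa whose spike already carries a pyramid of length $\geq 3$, settling the induction.
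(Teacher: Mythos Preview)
Your argument is correct and follows the same descent-from-the-spike idea as the paper, but you formalize it as an induction on the vertex count, explicitly excising the spike together with its unique primary parent whenever $\lambda(\text{spike})=2$. The paper's proof is terser: it simply says that if the spike is not the pyramidion of a length-$\geq 3$ pyramid one follows a chain of stacked $\apyr[2]$'s downward, and invokes the nonnegative-profit condition to guarantee this chain eventually hits a length-$\geq 3$ pyramid. Your deletion step makes rigorous exactly what the paper leaves implicit---that peeling off the top $\apyr[2]$ preserves connectedness, non-redundancy, and profit---so the descent must terminate at a length-$\geq 3$ pyramid before running out of vertices.

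Where the two proofs genuinely diverge is in uniqueness. The paper argues by direct contradiction: a second length-$\geq 3$ pyramid at time $\geq t$ would, by connectedness and Remark~\ref{rmk:two-pyrs}, sit inside a strictly larger pyramid together with the top pyramid, contradicting that the latter was the first one encountered in the descent. You instead obtain uniqueness from the induction, after checking that neither deleted vertex lies in any length-$\geq 3$ pyramid, so $C$ and $C'$ share exactly the same such pyramids and the inductive hypothesis transfers. Your route trades the one-line appeal to Remark~\ref{rmk:two-pyrs} for local bookkeeping about the spike's neighbourhood, but has the advantage of not leaning on the somewhat informal merging argument there. One minor point: the implication ``both parents of the spike are secondary $\Rightarrow \lambda(i,t)\geq 3$'' follows directly from the definition of secondary vertex (just unwind one level to see that all three grandparents are activated) rather than from Remark~\ref{rmk:generate-sec-vert}; the conclusion is nonetheless correct.
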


\begin{proof}
Start with the spike $(i,t)$. If this is the pyramidion of $\pyr{i}{t}$ with $l\geq 3$, then we are done. If not, then we have a sequence of $\pyr[2]{i}{t}$ stacked onto each other.
However, the chinampa has a profit greater than or equal to zero, so eventually, we will come across $\pyr{i}{t}$ with $l\geq 3$. Let the top pyramid be the first instance found with this strategy. There is no other pyramid $\pyr[l^\prime]{i^\prime}{t^\prime}$ with ${l^\prime}>2,{t^\prime}\geq t $, since through connectedness and Remark~\ref{rmk:two-pyrs}, we would conclude that there is a bigger pyramid containing both the top pyramid and 
$\pyr[l^\prime]{i^\prime}{t^\prime}$, which contradicts the assumption that the top pyramid is the first instance found.
\end{proof}

\begin{theorem}[Topological classification of chinampas]\label{thorem:TCC}
Any  chinampa can be described as a sequence of pyramids stacked onto each other.
\end{theorem}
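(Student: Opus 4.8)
The plan is to argue by strong induction on the number of secondary vertices of the diagram. A key decision is to \emph{not} induct inside the class of chinampas: the pieces that appear after peeling off the top of the figure need not have non-negative profit (a single $\pyr[2]{i}{t}$ can hang beneath a wide pyramid inside a genuine chinampa), so I would run the induction in the larger class of connected, non-redundant activation diagrams over a \tray. The profit hypothesis is then used in exactly one place, to guarantee via Proposition~\ref{prop:top-pyr} that the peeling of a chinampa eventually reaches a pyramid of length at least $3$; it plays no role in the recursion itself once pyramids of length $2$ are admitted as atoms. The base case is a single pyramid, which is trivially a one-term stacked sequence.

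For the inductive step, let $C$ be such a diagram with spike $(i^{*},t^{*})$, unique by Proposition~\ref{prop:two-pyr}, and let $P$ be the maximal pyramid whose pyramidion is the spike, i.e. the largest right triangle of activated vertices with apex $(i^{*},t^{*})$. Since the spike is a pyramidion it is activated by two vertices below it, so $P$ has length at least $2$. I would first record, from Remark~\ref{rmk:right-construction}, that nothing is activated to the right of column $i^{*}$, so $P$ meets the right boundary of $C$ and every activated vertex lies weakly below and to the left of the spike. Writing $\mathrm{int}(P)$ for the secondary vertices of $P$ strictly above its base row, each vertex of the base row of $P$ is then either a primary vertex (an external stimulus, a leaf of $P$) or a secondary vertex, in which case it is the pyramidion of whatever hangs beneath it.

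Next I would remove $\mathrm{int}(P)$ and analyse $C\setminus\mathrm{int}(P)$. The core claim is that its nontrivial connected components are in bijection with the secondary base vertices of $P$, each component being spiked at its base vertex and lying entirely beneath it. Separation is exactly what Remark~\ref{rmk:3-staking} provides: two activated vertices joined by a path of activated vertices of weakly smaller time must sit inside a common pyramid with strictly higher pyramidion, so if the pieces beneath two distinct base vertices were connected off $P$, those base vertices would be adjacent or would force an interior vertex, collapsing them into one pyramid (Remark~\ref{rmk:two-pyrs}) or violating non-redundancy. Maximality of $P$ together with Remark~\ref{rmk:right-construction} also rules out stray activated vertices not lying under any base vertex. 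Non-redundancy and connectedness of each component are inherited, and each component has strictly fewer secondary vertices than $C$, since the pyramidion of $P$ has been deleted and belongs to no component; hence the inductive hypothesis applies.

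Finally I would close the induction. Each component is, by the inductive hypothesis, a sequence of pyramids stacked into one another; its top pyramid $P_2$ has pyramidion equal to the component's root, which is a secondary vertex of $P$ other than the pyramidion of $P$, and $P_2$ is not contained in $P$, so $P_2$ stacks into $P$ in the sense of the stacking relation. Concatenating $P$ with these stacked sequences exhibits $C$ as a sequence of pyramids stacked into each other, and reassembles $C$ exactly because every activated vertex lies in $\mathrm{int}(P)$, on the base of $P$, or in a unique component. The step I expect to be the main obstacle is the separation claim for $C\setminus\mathrm{int}(P)$: one must show both that distinct secondary base vertices yield genuinely disconnected pieces and that no activated vertex escapes to the left of, or otherwise outside, the cones beneath the base of $P$. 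This is where non-redundancy, the merging of adjacent pyramids (Remark~\ref{rmk:two-pyrs}), and the maximality of $P$ have to be combined carefully; the profit bookkeeping, by contrast, is essentially a distraction for the recursion and is only invoked at the end, through Proposition~\ref{prop:top-pyr}, to note that the top pyramid of a chinampa has length at least $3$.
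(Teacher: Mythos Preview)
Your inductive strategy is a different route from the paper's short direct argument, and it is attractive because it would make the decomposition explicit. However, the separation claim contains a genuine gap.

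You assert that the nontrivial components of $C\setminus\mathrm{int}(P)$ are in bijection with the secondary base vertices of $P$, each spiked at one such vertex. This fails when two \emph{adjacent} base vertices of $P$ are secondary and the material beneath them is joined by further secondary vertices. Concretely, let $P=\pyr[3]{1}{2}$ with spike $(3,4)$ and base row $(1,2),(2,2),(3,2)$; take $(1,2)$ primary, $(2,2),(3,2)$ secondary, with $(0,0),(1,0),(2,0),(3,0)$ primary and $(1,1),(2,1),(3,1)$ secondary. Then $P$ is still the maximal pyramid at the spike (since $(0,1)$ is not activated), yet in $C\setminus\mathrm{int}(P)$ the secondary vertices $(2,2),(3,2),(1,1),(2,1),(3,1)$ form a single connected component with two top vertices at time $2$. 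This component is not a self-contained activation diagram: its own activation rule would generate $(3,3)$, which lies in $\mathrm{int}(P)$ and has been deleted. Hence it has no spike in the sense of Proposition~\ref{prop:two-pyr}, and the inductive hypothesis cannot be applied to it. Your clause ``collapsing them into one pyramid (Remark~\ref{rmk:two-pyrs})'' gestures at the right phenomenon but does not resolve it inside your recursion: the collapsed pyramid has pyramidion at an interior vertex of $P$, which your peeling has already removed, so there is nowhere in $C\setminus\mathrm{int}(P)$ for it to live.

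The paper avoids this by not recursing on $C\setminus\mathrm{int}(P)$ at all. It locates the unique top pyramid (Proposition~\ref{prop:top-pyr}), notes that it sits beneath a chain of $\apyr[2]$'s ending at the spike unless the spike is already its pyramidion, observes via Remark~\ref{rmk:right-construction} that every other $\apyr[n]$ with $n\ge 3$ lies at strictly earlier time, and then invokes connectedness to conclude that all pyramids are stacked into one another. To repair your approach, recurse not on the connected components of $C\setminus\mathrm{int}(P)$ but on the maximal pyramids stacked into $P$ at each non-apex vertex of $P$ (interior vertices included), in the spirit of Algorithm~\ref{alg:vertical-stakes}; those pieces are genuine activation diagrams with unique spikes and strictly fewer secondary vertices, so your induction would go through.
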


\begin{proof}
We showed that in a chinampa, there is a unique top pyramid (Proposition~\ref{prop:top-pyr}). This top pyramid is stacked onto a sequence of $\pyr[2]{i}{t}$ unless the spike belongs to the top pyramid (Proposition~\ref{prop:two-pyr}). Any other $\pyr[n]{i^{\prime}}{t'}$ ($n\geq 3$) must have $t^\prime<t$ (Remark~\ref{rmk:right-construction}). Pyramids are connected, which is only possible if they are stacked onto each other.
\end{proof}

The average chinampa is described in Figure \ref{fig:std}.

\begin{figure}[H]
\includegraphics[scale=0.6]{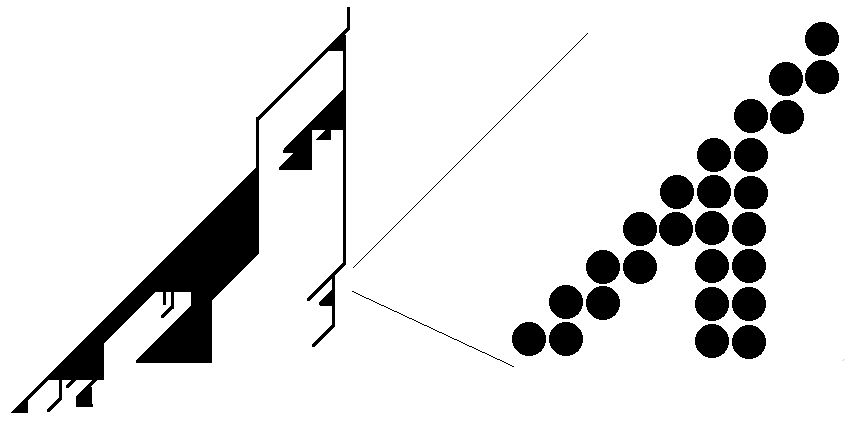}
\caption{ Standard example of a chinampa. (left) Zoomed-out image of a chinampa, where the chinampa can be represented by stacking pyramids. (right) Zoomed-in image of a chinampa, where we see the details of the activation diagram. The lines in the  left figure are groups of activated vertices. }\label{fig:std}
\end{figure}

We define an abstract pyramid $\apyr$ as an activated diagram without an initial vertex or an initial time. In $\apyr$, we always consider that the vertices at the bottom are primary ones. Then, $\apyr$ turns into $\pyr{i}{t}$ if placed in a base diagram (i.e., if we choose an initial vertex $i$ and a time $t$).

Consider a chinampa $C$. Let $\mathbf{P}$ be the set of $\apyr$, with one for each stacking of $\pyr{i}{t}$ in a chinampa $C$. We recover the chinampa by stacking abstract pyramids from the set $\mathbf{P}$. We associate the corresponding pyramidion with any $P\in \mathbf{P}$. If the pyramidion of $P$ belongs to the pyramid $Q$, then we call $Q$ the \emph{parent} of the pyramid $P$.  Note that after stacking, some primary vertices of the parent become secondary vertices. 

We describe an algorithm to create the list $\mathbf{P}$. We can use the breadth-first search (BFS) or depth-first search (DFS) algorithm~\cite{alg}. Algorithm~\ref{alg:vertical-stakes} is the pseudocode of an implementation of BFS. In this algorithm, the pyramidion and the parent of $P$ are attached to $P$ as $P.pyramidion$ and $P.parent$, respectively. 

\begin{algorithm}[H]
\SetAlgoLined
\caption{Factorization via BFS.}
\KwData{A chinampa}
\KwResult{A Unique factorization}
$Pyramid_0$ = pyramid whose pyramidion is the spike.\;
Cache = [$Pyramid_0$]\; factorization = [~]\;
\While{Cache != [ ]}{
init = pop(Cache)\;\tcc{compare the vertices of init with pyramidion}
\For{  $v$ in $V(init)$} 
{\uIf{$v$==init.pyramidion}{continue\;}
\uElseIf{there is $P\in\mathbf{P}$ with $P.parent==init$ and $P.pyramidion==v$ }{Add $P$ to Cache}
}
factorization.append(init)
}
\Return{ factorization}
\label{alg:vertical-stakes}
\end{algorithm}

\begin{remark}\label{rmk:det}
The vertices in a pyramid are ordered. For example, the dictionary order starts at the top and proceeds from left to right. Provided a fixed order for abstract pyramids, as described above, our algorithm is deterministic. Thus, we consider the returned list as a factorization of the chinampa in terms of pyramids.
\end{remark}

\section{Cascades and Cellular Atomata}\label{sec:cascades-cellular}

We adopted the convention from \cite{network}, stating that ``the network, node, link combination often refers to real systems ... In contrast, we use the terms graph, vertex, edge when we discuss the mathematical representation of these networks''. 

\subsection{Base Diagrams as a Model of a Network}

Our networks are such that there exist nodes \emph{p} firing due to external stimuli and nodes \emph{s} firing when the sum of the input signals from other nodes exceed their thresholds. Our goal is the study of cascades, which are networks whose number of nodes \emph{s} is greater or equal to the number of nodes \emph{p}. We translated the networks to a base diagram $(B, P:B\mapsto A)$, where a node \emph{p} firing due to external stimuli  at time \emph{t} in the network corresponded to a primary vertex $(p,t)$ of $V(B)$, and a node \emph{s} firing as a consequence of the signals of other nodes corresponded to a secondary vertex in $V(B)$. In this way, a cascade in a network corresponded to an activation diagram. The activation diagram in Figure \ref{fig:build} is associated to a cascade. 

Table \ref{table:Iso} resumes the relation between the definitions of networks and the corresponding definitions of the base diagrams. The terms on the left relate to network theory, and the terms on the right are the equivalent concepts in the base diagrams.

\begin{table}[H]
\caption{Equivalent definitions between networks and base diagrams.}\label{table:Iso}
\newcolumntype{C}{>{\centering\arraybackslash}X}
\begin{tabularx}{\textwidth}{CC}
\hline
 \textbf{Network Theory} &  \textbf{Base Diagram} \\ 
\hline
 External stimulated node &  Primary vertex \\  
 \hline
 Internal stimulated node & Secondary vertex\\
 \hline
 (Network,  external stimuli) & Activation diagram (AD)\\
 \hline
 Cascade&  AD with equal or more\\
 &secondary vertices than primary ones\\ 
\hline
\end{tabularx}
\end{table}

\subsection{Cellular Automata}

As we will see in this subsection, a cascade can be interpreted as a cellular automaton. We considered cellular automata \cite {cell} in which three consecutive colored (black or white) cells determined the color of the middle cell in the next iteration. In particular, we were interested in rule 192. This rule dictates that cell \emph{C} will be black if \emph{C} and the one to the left were black in the previous iteration (see Figure \ref{fig:192}). 

\begin{figure}[H]
\includegraphics[scale=0.9]{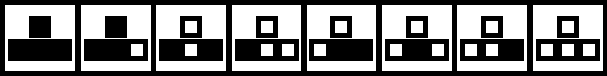}
\caption{Rule 192. Only the first and second stages lead to a black cell, similar to our hypothesis on the signal flow graphs.}\label{fig:192}
\end{figure}

In Figure \ref{fig:basis}, the vertical axis is the time, and we thought of the yellow blocks as the stimuli to keep alive the cells of the game of life \cite{life}. The lowest row represents a cell which survived for three units of time under rule 192. The next row shows 3 automata which survived 4 units of time, 6 which survived 5 units of time, and 12 which survived 6 units of time.

\begin{figure}[H]
\includegraphics[scale=0.5]{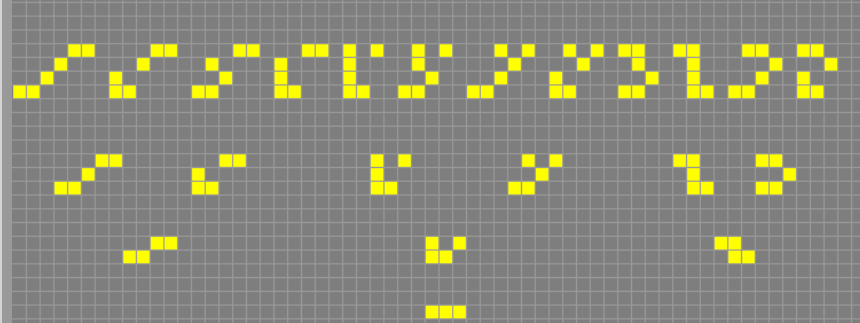}
\caption{Cellular automata with less than six external stimuli. }\label{fig:basis}
\end{figure}

Thus, a cascade on networks can be seen as the evolution of cellular automata, where we fixed the external stimuli and applied rule 192 to evolve the cells. More specifically, we employed rule 192 \cite{cell} but allowed the initial conditions to occur at different times. Accordingly, we could use the language of graph theory to describe the behavior of the cellular automata (cascade).

\section{Combinatorial Description of Chinampas}\label{sec:desc.chinampa}

Since a base diagram is a set of chinampas, we studied the properties of chinampas. As a part of our contributions, we present some combinatorial properties of a chinampa. For example, we will give a formula for the number of chinampas inside a pyramid. This formula is given in terms of a generating function. Another result is the prediction of when a vertex appears as a secondary vertex in a chinampa. This can be accomplished by using algorithms to find the pyramids in a chinampa.  

\subsection{Profit Properties}\label{subs:top-description}

In the current context, since there is no confusion between a pyramid and an abstract pyramid, we sometimes refer to both as pyramids. From now on, we will construct chinampas via the stacking process. Remember that we do not allow the stacking of two pyramids when one ends inside the other, neither one is adjacent to the other, or some primary vertices in the abstract pyramid turn into secondary vertices once we stack a pyramid. 

Let $P_1$ and $P_2$ be two abstract pyramids, and let $C$ be the chinampa obtained by stacking $P_1$ into $P_2$. We define the \emph{intersection} $P_1\cap P_2$ of two abstract pyramids $P_1$ and $P_2$ as the abstract pyramid with activated vertices at the intersection of the activated vertices of $P_1$ and  $P_2$. Given the definition of an abstract pyramid, if a vertex is primary in one vertex and secondary in the other, then the vertex is primary in $P_1\cap P_2$. 

Remember that we defined the profit as the difference between the secondary vertices and the number of primary vertices. If the intersection of two pyramids is only one point, hen we assume a profit of $-1$. We have the following result for vertical stacking:

\begin{lemma}\label{lemma:inclusion-exclusion}
Let \emph{C} be a chinampa obtained by vertical stacking of two abstract pyramids $P_1$ and $P_2$. The profit function satisfies

\begin{equation*}\label{eqn:iel}
 profit(C)=profit(P_1)+profit(P_2)-profit(P_1\cap P_2).
\end{equation*}
\end{lemma}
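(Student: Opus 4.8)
The plan is to turn the profit identity into two purely set-theoretic inclusion--exclusion statements. Write $s(X)$, $p(X)$, and $|V(X)|$ for the numbers of secondary vertices, primary vertices, and total activated vertices of a diagram $X$. Since $profit(X)=s(X)-p(X)$ and $|V(X)|=s(X)+p(X)$, we may rewrite $profit(X)=2s(X)-|V(X)|$. With this rewriting the desired equality follows at once from the two identities
\[
|V(C)|=|V(P_1)|+|V(P_2)|-|V(P_1\cap P_2)|,\qquad
s(C)=s(P_1)+s(P_2)-s(P_1\cap P_2),
\]
because taking twice the second display minus the first reproduces $profit(C)=profit(P_1)+profit(P_2)-profit(P_1\cap P_2)$ term by term.

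The first identity I would dispatch quickly. By the very definition of the intersection of abstract pyramids, $V(P_1\cap P_2)=V(P_1)\cap V(P_2)$, so this is ordinary inclusion--exclusion for finite sets, valid as soon as one knows $V(C)=V(P_1)\cup V(P_2)$; that is, that vertical staking activates no vertex outside the two pyramids.

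For the second identity, let $S_X$ denote the set of secondary vertices of $X$. A vertex of $V(P_1)\cap V(P_2)$ is secondary in $P_1\cap P_2$ exactly when both of its parents $(r,t-1)$ and $(r-1,t-1)$ lie in $V(P_1)\cap V(P_2)$, i.e. exactly when it is secondary in both $P_1$ and $P_2$; hence $S_{P_1\cap P_2}=S_{P_1}\cap S_{P_2}$, which is precisely the convention that a vertex primary in one pyramid and secondary in the other remains primary in the intersection. Therefore $s(P_1)+s(P_2)-s(P_1\cap P_2)=|S_{P_1}\cup S_{P_2}|$, and the identity reduces to the single claim $S_C=S_{P_1}\cup S_{P_2}$. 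The inclusion $S_{P_1}\cup S_{P_2}\subseteq S_C$ is immediate, since both parents of a vertex secondary in $P_i$ already lie in $V(P_i)\subseteq V(C)$.

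The reverse inclusion $S_C\subseteq S_{P_1}\cup S_{P_2}$ is the heart of the matter and the step I expect to be the main obstacle: I must exclude the possibility that a vertex $v$ becomes secondary in $C$ by borrowing one parent from $V(P_1)\setminus V(P_2)$ and the other from $V(P_2)\setminus V(P_1)$, which would create an activation belonging to neither pyramid. This is exactly the adjacency phenomenon analyzed in Remark~\ref{rmk:two-pyrs} and Remark~\ref{rmk:3-staking}: two pyramids placed side by side fuse into a larger pyramid, and such configurations are forbidden in a genuine vertical staking. I would therefore argue from the triangular shape of a pyramid together with the staking hypothesis that the overlap $V(P_1)\cap V(P_2)$ is itself a (possibly degenerate) sub-pyramid whose presence forces every parent pair of a secondary vertex of $C$ to lie entirely inside $V(P_1)$ or entirely inside $V(P_2)$, simultaneously yielding the equality $V(C)=V(P_1)\cup V(P_2)$ needed above. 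Finally, as a consistency check, when the overlap is a single point that point is the primary base corner of the upper pyramid, so $s(P_1\cap P_2)=0$ and $|V(P_1\cap P_2)|=1$ give $profit(P_1\cap P_2)=-1$, in agreement with the stated convention, and the two displayed identities hold verbatim.
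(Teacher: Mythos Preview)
Your argument is correct and lands in the same place as the paper, though you organize the bookkeeping a bit differently. The paper simply takes $P_1$ above $P_2$, writes $n_i,m_i$ for the primary and secondary counts of $P_i$ and of $P_3=P_1\cap P_2$, and observes directly that $C$ has $(n_1-n_3)+n_2$ primary vertices and $(m_1+n_3)+m_2-(m_3+n_3)=m_1+m_2-m_3$ secondary vertices, from which the identity is immediate. Your reformulation $profit=2s-|V|$ together with the two set-level identities $|V(C)|=|V(P_1)|+|V(P_2)|-|V(P_1\cap P_2)|$ and $s(C)=s(P_1)+s(P_2)-s(P_1\cap P_2)$ recovers exactly these counts. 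What your version buys is a cleaner separation of the combinatorics from the geometry: the one substantive step, that vertical staking activates nothing outside $V(P_1)\cup V(P_2)$ (equivalently $S_C=S_{P_1}\cup S_{P_2}$), is something the paper uses implicitly but does not isolate, and you are right to flag it as the heart of the matter and to trace it back to the non-adjacency requirement discussed around Remarks~\ref{rmk:two-pyrs} and~\ref{rmk:3-staking}.
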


\begin{proof}
Suppose pyramid $P_1$ is above $P_2$. Then, the abstract pyramid $P_3=P_1\cap P_2$ is once again a pyramid. Let $n_1$, $n_2$, and $n_3$ be the number of primary vertices of $P_1$, $P_2$, and $P_3$, respectively. Then, the $n_3$ primary vertices of $P_1$ turn into secondary vertices in $C$, and therefore the number of primary vertices of $C$ is given by 

\begin{equation*}
(n_1-n_3)+n_2,
\end{equation*}

What is more, let $m_1$, $m_2$, and $m_3$ be the number of secondary vertices of $P_1$, $P_2$, and $P_3$, respectively. Then, the number of secondary vertices of $C$ is 

\begin{equation*}
(m_1+n_3)+m_2-(m_3+n_3).
\end{equation*}

Therefore, the results follow.
\end{proof}

Note that the arguments in the proof of Lemma \ref{lemma:inclusion-exclusion} are standard and can be used to prove the following proposition:

\begin{proposition}\label{prop:ie}
Let \emph{C} be a chinampa with a set of abstract pyramids $\mathbf{P}$. The profit function satisfies the inclusion-exclusion principle

\begin{multline}\label{eqn:ie}
 profit(C)=\\
\sum_{P\in\mathbf{P}}  profit(P)-\sum_{P_1,P_2\in\mathbf{P}} profit(P_1\cap P_2)
+\sum_{P_1,P_2,P_3\in\mathbf{P}}  profit(P_1\cap P_2\cap P_3)\pm \cdots.
\end{multline}
\end{proposition}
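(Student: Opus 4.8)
The plan is to recognize Equation~\eqref{eqn:ie} as the standard inclusion--exclusion formula attached to a \emph{valuation}, and to deduce it from Lemma~\ref{lemma:inclusion-exclusion} by induction on $n=|\mathbf{P}|$. The conceptual point is that $profit$ behaves like a signed measure on the lattice of activated-vertex sets generated by the pyramids of $\mathbf{P}$ under union and intersection: Lemma~\ref{lemma:inclusion-exclusion}, rearranged, is exactly the modularity identity
\[
profit(P_1\cup P_2)+profit(P_1\cap P_2)=profit(P_1)+profit(P_2),
\]
and once modularity is available the alternating-sum formula is forced by the usual combinatorial bookkeeping, provided the collection is closed under intersection and the distributive law holds.

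For the induction I would order the pyramids compatibly with the stacking tree produced by Algorithm~\ref{alg:vertical-stakes}, taking $P_{n+1}$ to be a leaf, so that $C'=P_1\cup\cdots\cup P_n$ is itself a chinampa and $C=C'\cup P_{n+1}$. The first step is to check that the two-term identity still holds when the lower object is a chinampa rather than a single pyramid, i.e.
\[
profit(C)=profit(C')+profit(P_{n+1})-profit(C'\cap P_{n+1}).
\]
I expect the proof of Lemma~\ref{lemma:inclusion-exclusion} to carry over verbatim: the only mechanism at work there is that the primary vertices of the upper piece which already sit in the secondary region of the lower piece get relabelled as secondary, and that count is exactly the number of primary vertices of the overlap; nothing in this bookkeeping uses that the lower piece is a single pyramid.

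The second step is to expand both $profit(C')$ and $profit(C'\cap P_{n+1})$ by the inductive hypothesis. Here I would use the distributive law
\[
C'\cap P_{n+1}=\Big(\bigcup_{i=1}^{n}P_i\Big)\cap P_{n+1}=\bigcup_{i=1}^{n}\big(P_i\cap P_{n+1}\big),
\]
so that $C'\cap P_{n+1}$ is again a union of $n$ pyramids, to which the hypothesis applies. Substituting the two expansions into the two-term identity and collecting the terms $profit\big(P_{i_1}\cap\cdots\cap P_{i_k}\big)$ by the cardinality of the index set reproduces the alternating sum of Equation~\eqref{eqn:ie}; this last manipulation is the classical algebraic identity behind inclusion--exclusion and requires no new geometric input. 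Throughout I would keep the conventions already fixed before the statement: an empty intersection contributes $0$ and a one-point intersection contributes $-1$, so that degenerate overlaps are counted consistently.

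The hard part will be the bookkeeping that makes the two-term identity and the distributive law legitimate at the level of \emph{activated} vertices rather than mere vertex sets. Concretely, I must verify: (i) every intersection $P_{i_1}\cap\cdots\cap P_{i_k}$ arising in the expansion is again an abstract pyramid, or empty, or a point, so that $profit$ is defined on it --- this should follow from the fact that pyramids are triangular regions closed under intersection, together with the geometry of stacking established in Proposition~\ref{prop:top-pyr} and Remark~\ref{rmk:3-staking}; (ii) the rule that a vertex which is primary in one factor and secondary in another is declared primary in the intersection is preserved when intersections are distributed over unions, so that the profits appearing on the two sides genuinely match; and (iii) the relabelling of primaries as secondaries in $C=C'\cup P_{n+1}$ does not double count against the relabellings already performed inside $C'$. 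Isolating these three points and checking each reduces the proposition to the formal inclusion--exclusion identity for a valuation on a distributive lattice.
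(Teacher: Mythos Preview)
Your proposal is correct and is essentially a careful unpacking of the paper's own proof, which consists of a single sentence: ``arguments in the proof of Lemma~\ref{lemma:inclusion-exclusion} are standard and can be used to prove the following proposition.'' The induction on $|\mathbf{P}|$ via the modularity identity you extract from Lemma~\ref{lemma:inclusion-exclusion} is exactly the ``standard argument'' the paper is gesturing at, and your points (i)--(iii) identify the bookkeeping the paper leaves implicit.
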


There are two approaches to stacking one $\apyr[2]$ into another $\apyr[2]$. According to Equation (\ref{eqn:ie}), stacking  $\apyr[2]$ into a chinampa does not change the profit. We describe how the action of stacking affects the profit:

\begin{lemma}\label{lemma:plus}
Let $C$ be a chinampa with profit $n$. Stacking a $\pyr{i}{t}$ with $l\geq 3$ into $C$ creates a chinampa $C'$ with profit $m>n$.  
\end{lemma}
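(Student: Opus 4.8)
The plan is to reduce the whole statement to one numerical invariant of a pyramid and then feed it into the inclusion--exclusion formula already established. First I would record the profit of an abstract pyramid of length $l$: it has $l$ primary vertices along its base and $(l-1)+(l-2)+\cdots+1=\binom{l}{2}$ secondary vertices, so
\[
profit(\apyr[l])=\binom{l}{2}-l=\frac{l(l-3)}{2}=:f(l).
\]
Note that $f(1)=f(2)=-1$, which agrees with the stated convention that a one--point intersection carries profit $-1$, and that $f(l)-f(l-1)=l-2$, so $f$ is strictly increasing for arguments $\ge 2$ and non--decreasing on all of $\{1,2,3,\dots\}$. This single computation is what makes the rest of the argument purely arithmetic.

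Next I would describe the staking quantitatively. Write $P=\pyr{i}{t}$ for the staked pyramid with $l\ge 3$ and let $C'$ be the resulting chinampa. Adjoining $P$ to the family $\mathbf{P}$ of pyramids of $C$ only alters the terms of the inclusion--exclusion sum that involve $P$; collecting them (Lemma~\ref{lemma:inclusion-exclusion}, extended to a chinampa via Proposition~\ref{prop:ie}) yields
\[
profit(C')=profit(C)+profit(P)-profit(P\cap C).
\]
Here $P\cap C$ is the part of $P$ already present in $C$. Since the pyramidion is the unique highest vertex of $P$ and it is a secondary vertex of $C$, this overlap is exactly the downward support region of that pyramidion truncated by $C$, which is again a pyramid sharing the pyramidion of $P$; I will call its length $l'$.

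Then I would pin down $l'$. Staking never places a pyramid inside another --- a pyramid contained in $C$ would add nothing and violate the non--containment clause --- so $P\not\subseteq C$, and for a sub--pyramid of $P$ anchored at the pyramidion of $P$ this is equivalent to $l'<l$, i.e. $1\le l'\le l-1$. Using monotonicity of $f$,
\[
profit(C')-profit(C)=f(l)-f(l')\ \ge\ f(l)-f(l-1)=l-2\ \ge\ 1>0,
\]
where the first inequality uses $f(l')\le f(l-1)$ (valid because $l'\le l-1$ and $f$ is non--decreasing) and the last uses $l\ge 3$. Hence $m=profit(C')>profit(C)=n$, which is the claim.

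The step I expect to be the real obstacle is the middle one: checking that the total overlap $P\cap C$ collapses to a single pyramid of length $l'<l$ even when $C$ is a union of many stacked pyramids, so that the many intersection terms of Proposition~\ref{prop:ie} telescope into the clean two--term formula. I would treat this exactly as in the proof of Lemma~\ref{lemma:inclusion-exclusion}: the vertices of $P$ lying in $C$ are precisely those on or above the base of the truncating pyramid, the freshly added base of $P$ forces strict containment to fail (hence $l'<l$), and the primary--versus--secondary bookkeeping of that lemma shows the net change is exactly $profit(P)-profit(P\cap C)$.
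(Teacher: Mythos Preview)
Your argument is correct and reaches the same numerical conclusion as the paper, but it is packaged differently. The paper argues by a direct count: if the parent pyramid in $C$ has its base at time $t+s$ with $0<s<l$, then stacking adds $s$ net primary vertices and $(l-1)+(l-2)+\cdots+(l-s)$ secondary vertices, and one checks $(l-1)+\cdots+(l-s)>s$ for $l\ge 3$. You instead route everything through the inclusion--exclusion formula (Lemma~\ref{lemma:inclusion-exclusion} / Proposition~\ref{prop:ie}) and the monotonicity of $f(l)=l(l-3)/2$, obtaining $profit(C')-profit(C)=f(l)-f(l')\ge l-2\ge 1$. With $l'=l-s$ the two computations are literally equal, so the difference is purely in presentation: your version is more conceptual and makes explicit why the earlier inclusion--exclusion machinery is relevant, while the paper's version is a bare hands count that avoids invoking that machinery.

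One point worth noting: the step you flag as the ``real obstacle''---that $P\cap C$ is a single sub-pyramid of length $l'<l$---is exactly the assumption the paper makes as well, phrased there as ``staking only affects the vertices of $P$'' (meaning the parent pyramid). Your geometric justification is adequate: since the pyramidion of the new pyramid is a secondary vertex of the parent $Q$, the support of that pyramidion down to $Q$'s base is contained in $Q$, hence $P\cap Q$ is the top $l-s$ rows of $P$; and the non-containment clause in the definition of staking forces $l'=l-s<l$. So there is no genuine gap; you have simply identified and addressed the same locality assumption the paper uses implicitly.
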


\begin{proof} 
Assume that we stack $\pyr{i}{t}$ into $C$. Locally, $\pyr{i}{t}$ is stacked in a pyramid $P$ of $C$ so that the primary vertices of $P$ with time $t+s$ where $0<s<l$  are replaced by secondary vertices (see Figure~\ref{fig:plug}). The number of primary vertices of $C$ increases by $l-(l-s)=s$, although the number of secondary vertices increases by $(l-1+l-2+...+l-s)$ because stacking only affects the vertices of $P$. The result follows since $l\geq 3$ implies $(l-1+l-2+...+l-s)>s$.
\end{proof}

\begin{figure}[H]
\includegraphics[scale=0.7]{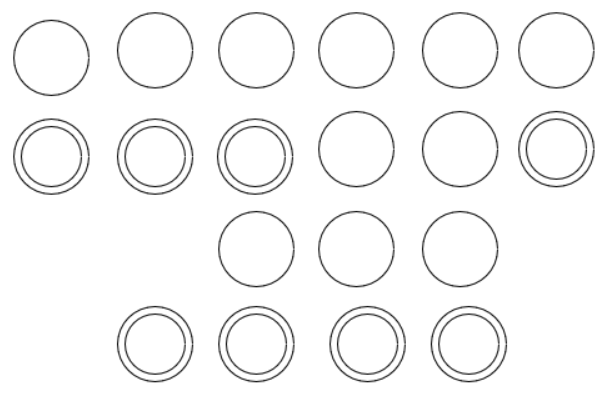}
\caption{Here, $\pyr{i}{t}$ with $l=4$ is stacked into a chinampa $C$. In this example, the two primary vertices of $C$ at $t+2$ become secondary. }\label{fig:plug}
\end{figure}

\begin{remark}\label{rmk:prof-0-1}
Stacking one pyramid with a length of three and several pyramids with a length of two returns a chinampa with a profit of zero. Conversely, under Lemma \ref{lemma:plus}, if we have a chinampa with a profit of zero, then it must be the result of stacking one pyramid with a length of three and some pyramids with a length of two. Similarly, a chinampa with a profit of one has  two copies of pyramids with a length of three and several copies of pyramids with a length of two. For a profit of two,  we can have either one $\apyr[4]$ and several $\apyr[2]$ instances stacked on or below it or three  $\apyr[3]$ instances and several $\apyr[2]$ instances.
\end{remark}

In general, a chinampa with a profit $k$ is made by stacking several $\{\apyr[l_i]\}$ so that they satisfy Equation \eqref{eqn:ie}. We know that $profit(\apyr)=\frac{l(l-3)}{2}$, which is less than or equal to $k$ under Lemma~\ref{lemma:plus}. This gives a bound on the largest pyramid contained in a chinampa in terms of the profit of the chinampa  \begin{equation}
  l_i\leq \frac{3+\sqrt{9+8k}}{2},  
\end{equation} 
for all $l_i.$ 

\subsection{Combinatorial Description of Chinampas with Profits of Zero and One}\label{sec:comb}

We aim to find the number of chinampas inside $\apyr[n]$. A general chinampa can always be considered as part of \apyr[n] (see Remark~\ref{rmk:right-construction}). Therefore, consider $P=\apyr[n]$ in $\tell[n]$. We define \[\chin{(2,a_2),(3,a_3),\dots, (k,a_k)}\] to be the number of all chinampas contained in $P$ and obtained by stacking $a_i$ copies of $ \apyr[i]$, where $2\leq i\leq k$.

\begin{example}
It is clear that $\chin{(n,1)}=1$,  $\chin[n+k]{(n,k+1)}=2^{k}$ for $k\geq 0$ and 
\begin{equation}
  \chin{(2,n-1)}=2^{n-2}\label{eqn:2n}  
\end{equation}
\end{example}

To simplify the notation, from now on, we will omit the terms corresponding to $\apyr[2]$, although they remain a part of the calculations. Thus, $\chin{(2,1),(n-1,1)}$ becomes $\chin{(n-1,1)}$.

In Figure \ref{fig:fig}, we show three of the elements identified by \chin[4]{ (3,1)}. The three chinampas with a profit of zero have $\apyr[3]$ at the top.

\begin{figure}[H]
\begin{tabular}{ccc}
\includegraphics[width=0.3\linewidth]{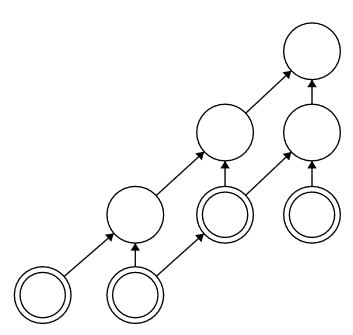}
&\includegraphics[width=0.3\linewidth]{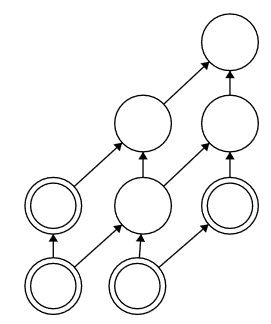}
&\includegraphics[width=0.3\linewidth]{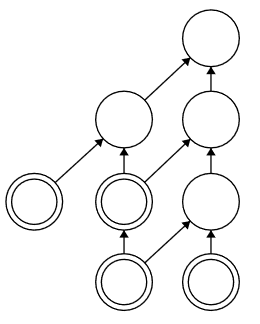}\\
({\bf a}) Stage 1&({\bf b}) Stage 2&({\bf c}) Stage 3\\
\end{tabular}
\caption{Three chinampas with a profit of zero having $\apyr[3]\,$ at the top. Note that the stages were obtained from \apyr[3], replacing one of the three primary vertices $v$ at a time of zero by two primary vertices at the time $-1$ so that $v$ becomes a secondary vertex.}
\label{fig:fig}
\end{figure}

To count the number of zero-profit chinampas within an instance of $\apyr[n]$, we consider the formal series 

\begin{equation*}
\sum_n \chin[n+3]{(3,1)}\frac{x^n}{n!}. 
\end{equation*}

Following \cite{gf}, we use the calculus of formal exponential generating functions to determine all coefficients. Note that $\chin[n+3]{(3,1)}=0$ for $n<0$.

 We will use the fact that
 
\begin{equation*}
p(x)=2\int p(x)dx+f(x)e^{2x}+h(x)
\end{equation*}
has the solution

\begin{equation}
p(x)= \frac{d}{dx} \left(e^{2x}\int f(x) +h(x)e^{-2x} dx\right)\label{eq:sol}.
\end{equation}

\begin{proposition}
\label{lemma:zero} In $\apyr[n+3]$, for the nonnegative integer $n$, there are $(2+3n) 2^{n-1}$ possible zero-profit chinampas. Furthermore, these numbers are given by the coefficients of the generating function

\begin{equation*}
\sum_{n=0}^\infty \chin[n+3]{(3,1)}\frac{x^n}{n!}=(3x+1)e^{2x}.    
\end{equation*}
\end{proposition}

\begin{proof}
Recall that according to Remark~\ref{rmk:prof-0-1}, a zero-profit chinampa has only one stacked $\apyr[3]$. First, we can explicitly count the number of chinampas in $\apyr[n+3]$ when $\apyr[3]$ is at the top of $\apyr[n+3]$. For each integer $n \ge 0$, there are $3(2^{n-1})$ such possible chinampas. This is because for each element in Figure \ref{fig:fig}, we create new elements by stacking a sequence of $\apyr[2]$ below the element. Therefore, the result follows from Equation \eqref{eqn:2n}. The remaining chinampas in $\apyr[n+3]$ are those for which $\apyr[3]$ is not at the top, namely $2\chin[n-1+3]{(3, 1)}$. This follows because $\apyr[3]$ is within one of the two subpyramids $pyramid(n-1+3)$: one given by ignoring the main diagonal of $pyramid(n+3)$ or the other by ignoring the right vertical column of $pyramid(n+3)$.

Therefore, for $n>0$, we have
\begin{equation*}
\chin[n+3]{(3,1)} = 2\chin[n-1+3]{(3, 1)}+3(2^{n-1}).   
\end{equation*}

Now, we define 

\begin{equation*}
p(x) = \sum_{n= 0}^{\infty} \chin[n+3]{(3,1)}\frac{x^n}{n!}.
\end{equation*}
so
\begin{align*}
p(x) &= \sum_{n=0}^{\infty} ch(n+3,(3,1))\frac{x^n}{n!}\\ 
&=1+ 2\sum_{n=1}^{\infty} ch(n-1+3, (3, 1))\frac{x^n}{n!}+\sum_{n=0}^{\infty} 3(2^{n-1})\frac{x^n}{n!}\\
&=2\int{p(x) dx} + \frac{3e^{2x}-1}{2}
\end{align*}

Using Equation (\ref{eq:sol}) and the condition $\chin[3]{(3,1)}=1$,  we obtain

\begin{align*}
\sum_{n=0}^{\infty} \chin[n+3]{(3,1)}\frac{x^n}{n!}
&=(3x+1)e^{2x}\\
&=\sum_{n=0}^{\infty} (2+3n) 2^{n-1}\frac{x^n}{n!}
\end{align*}
\end{proof}

\begin{proposition}\label{lemma:one}
Chinampas of a certain unit of profit have the generating function

\begin{equation*}
\sum_{n=0}^{\infty} \chin[n+4]{ (3,2)}\frac{x^n}{n!}=(9x^2+18x+4)\frac{e^{2x}}{2}.
\end{equation*}
\end{proposition}

\begin{proof} 
A chinampa unit of profit can only be formed by two copies of $\apyr[3]$ and chains of $\apyr[2]$, as shown in Remark~\ref{rmk:prof-0-1}. We define the formal series

 \[q(x) = \sum_{n=0}^{\infty} \chin[n+4]{(3,2)}\frac{x^n}{n!},\] 
and consider the following cases:

\begin{itemize}
\item None of the instances of $\apyr[3]$ are at the top. We then have subpyramids as in the previous proposition, so we count $2\chin[n+3]{(3,2)}.$ 
\item One $\apyr[3]$ instance is at the top. Then, the remaining $\apyr[3]$ instances can be placed in 
    $2\chin[n+3]{(3,1)}$ ways on the two subpyramids. However, the two subcases have $\chin[n+2]{(3,1)}$ terms in common, as shown in Figure \ref{fig:double}. Thus, the correct number of combinations is $2\chin[n+3]{ (3,1)}-\chin[n+2]{(3,1)}$.
\end{itemize}

\begin{figure}[H]
\includegraphics[scale=0.8]{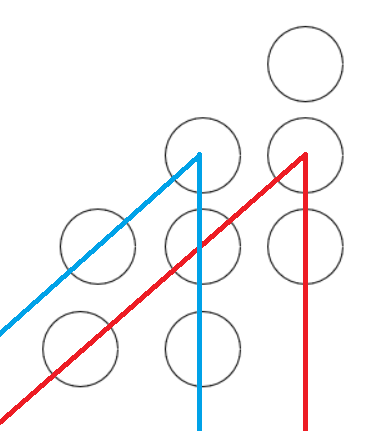}
\caption{Stacking any pyramid onto pyramid(3) in the vertex, where the red and blue lines collide and continue the process of stacking pyramids iteratively. This process describes a family of pyramids counted twice: once under the blue region and once under the red region. }
\label{fig:double}
\end{figure}

We conclude that for each nonnegative integer $n$, we have

\[\chin[n+4]{(3,2)}=2\chin[n+3] {(3,2)}+2\chin[n+3]{ (3,1)}-\chin[n+2]{(3,1)}.\]

Then, using the generating series $p(x)$ found in Lemma~\ref{lemma:zero}, we compute

\begin{eqnarray*}
q(x)&=&
\sum_{n=0}^{\infty} \chin[n+4]{ (3,2)}\frac{x^n}{n!}\\
&=&2\sum_{n=1}^{\infty} \chin[n+3] {(3,2)}\frac{x^n}{n!}+2\sum_{n=0}^{\infty} \chin[n+3] {(3,1)}\frac{x^n}{n!}-\sum_{n=1}^{\infty} \chin[n+2]{(3,1)}\frac{x^n}{n!}\\
&=&2\int q(x)dx+2 (3x+1)e^{2x}- \int (3x+1)e^{2x}dx\\
&=&2\int q(x)dx+ \frac{18x+9}{4}e^{2x}
\end{eqnarray*}

By solving Equation (\ref{eq:sol}), we obtain

\begin{eqnarray*}
\sum_{n=0}^{\infty} \chin[n+4] {(3,2)}\frac{x^n}{n!}&=&\frac{9+2c+36x+18x^2}{4}e^{2x}
\end{eqnarray*}

We compute $\chin[4]{(3,2)}=2$ to conclude

\[\sum_{n=0}^{\infty} \chin[n+4] {(3,2)}\frac{x^n}{n!}=(4+18x+9x^2)\frac{e^{2x}}{2}.\]
\end{proof}

\subsection{Algorithms for Activated Vertices}\label{sec:alg}

We will describe an algorithm to construct stacked pyramids of chinampas and to predict whether a vertex is activated at a given time within a chinampa.

\subsubsection*{Building Chinampas}

Following ideas from dynamical programming, we compute the pyramids that make up a chinampa. As a reminder, we assume that each $\apyr$ comes with a natural time ordering on its vertices.

We first construct a chinampa with a known set of primary vertices sorted by their time of occurrence, followed by a secondary sorting of the vertex labels. For every sequence of $n$ consecutive primary vertices with the same time $t$, we assign a new structure named an \emph{interval}. An interval only remembers the coordinates $(n_0,t), (n_1,t)$ of the first and last primary vertices, while $t$ is the time of the interval's occurrence. We assign the order inherited from the set of primary vertices to the set of intervals. The left vertex of the interval establishes the order.

Next, we associate $\apyr$ with the interval with the lowest parameter $t$ that has $l$ consecutive vertices. An iterative process to build the chinampa is as follows. Given the next interval, defined by consecutive vertices $l_k$ with $t^\prime\geq t$, we check whether the first or last term is next to a pyramid built previously. If so, then we extend the interval to include the secondary vertices of the pyramid, whose time equals $t^\prime$. Once we grow the interval from $l_k$ to $l_k+d_k$, we assign to the interval the $\apyr[l_k+d_k]$ (see Algorithm \ref{alg:pyr}). The auxiliary Algorithm \ref{alg:correct} removes duplicates.
\newpage

\begin{algorithm}[H]
\SetAlgoLined
\KwIn{An ordered list of primary vertices \emph{PV}.}
\KwResult{
A list of pyramids\;}
listOfPyramids = $[\,]$\; interval.t= PV[0].time, interval.lP=PV[0].position, interval.rP=PV[0].position;\tcp{time, left pos,right  pos}
\For{vertex in $PV[1:]$}
{
\uIf {vertex.time == interval.t and vertex.position = interval.rP+1}
{interval.rP = vertex.position}
\Else{
listOfPyramids.append(interval)\;
interval.t=vertex.time, interval.lP=vertex.position, interval.rP=vertex.position\;
}}
listOfPyramids.append(interval)\;
pastPyramids = copy(listOfPyramids)\;
\For{interval in ListOfPyramids}{
\For{lowerPyramid in pastPyramids}
{deltaT=(interval.t-lowerPyramid.t)\;
\uIf{lowerPyramid.t+(lowerPyramid.rP-lowerPyramid.lP)  $<$interval.t}{
pastPyramids.remove(lowerPyramid) 
}
\uElseIf{lowerPyramid.t$>$interval.t +interval.rP-interval.lP}{break}
\uElseIf{interval.lP==lowerPyramid.rP+1}{
interval.lP=lowerPyramid.lP+deltaT
}
\ElseIf{interval.rP==lowerPyramid.lP+deltaT-1}{
interval.rP=lowerPyramid.rP
}}
}
listOfPyramids = removeDuplicates(listOfPyramids); \tcp{see Algorithm~\ref{alg:correct}}
\Return listOfPyramids
\caption{$buildChinampas$}\label{alg:pyr}
\end{algorithm}

\begin{algorithm}[H]
\SetAlgoLined
\KwIn{An ordered list of intervals $listOfPyramids$\;}
\KwResult{
An ordered list of intervals without intersection\;}
dynamicCopy=copy(listOfPyramids)\;
previous = dynamicCopy[0]\;
index = 1\;
\For{walker in $listOfPyramids[1:]$}{
\uIf{previous.rP==walker.lP-1 and previous.t==walker.t}
{  
previous.rP=walker.rP
dynamicCopy=dynamicCopy[:index]+dynamicCopy[index+1:]\;
}
\Else{
previous =dynamicCopy[index]\; 
index = index+1\;
}
}
\Return{dynamicCopy}
\caption{$removeDuplicates$}\label{alg:correct}
\end{algorithm}

Let $n$ be the number of primary vertices, and let $n_0=int(n/2)$. To compute the time complexity of Algorithm \ref{alg:pyr}, we analyzed the best-case and worst-case scenarios. The best-case scenario is where all vertices are part of the base of a pyramid. In the best-case scenario, the computational complexity is $O(n)$. The worst-case scenario is where we have $n_0$ copies of $\apyr[n_0]$ concatenated such that two consecutive pyramids with different times share the maximum area possible. In the worst-case scenario, the algorithm complexity is $O(n^2)$ due to the double loop. 

To determine whether a vertex is activated,  we must determine whether it is contained within a pyramid. Therefore, for each pyramid, one must verify whether the vertex satisfies the constraints necessary to keep them within the region defined by the corners of the pyramid. See Algorithm \ref{alg:det} for the time complexity $O(n)$ in the best case and $O(n^2)$ in the worst case (because it calls back to Algorithm \ref{alg:pyr}). 
The source code of this algorithm can be accessed at \url{https://github.com/mendozacortesgroup/chinampas/} ({accessed on 2023/03/03}).

\begin{algorithm}[H]
\SetAlgoLined
\KwIn{A vertex $(n,t_0)$, a list of primary vertices \emph{PV}.}
\KwResult{
Boolean explaining if the vertex $n$ will be activated at time $t_0$\;}
\If{$(n,t_0)$ in $PV$ }
{ \Return True \;}
orderedListOfPyramids = $buildPyramids(PV)$\;
\For{pyramid in orderedListOfPyramids}
{
deltaT=($t_0-pyramid.t$)\;
\uIf{ $pyramid.lP+deltaT\leq n\leq pyramid.rP$ and $pyramid.t\leq t_0\leq n-pyramid.lP+pyramid.t$}
               {
\tcc{ $(n,t_0)$ in a pyramid}
\Return True\;}
\ElseIf{ $pyramid.t> t_0$}{break\; }
}{\Return False\;}
\caption{$will\_vertex\_be\_activated$}\label{alg:det}
\end{algorithm}

\section{Triangular Sequences}\label{section:biseq}

We study the chinampas obtained by stacking several $\apyr[2]$ instances below $\apyr[n]$ for $n\geq 4$. For ease of exposition, we define \emph{roots} as the sequences of $\apyr[2]$ stacked on top of each other. Note that $\apyr[n]$ with $n\geq 4$ can have multiple roots. For simplicity, we let $\apyr[n]$ have $n=3K$ and $K\in \mathbb{N}$, and all roots had the same number $R$ of $\apyr[2]$. 

\begin{example}
Consider the two extreme cases for $\apyr[6]$, whose roots are depicted in Figure  \ref{fig:roots}. \linebreak In Figure \ref{fig:roots}a, the roots are formed by stacking $\apyr[2]$ vertically, while in Figure \ref{fig:roots}b, the $\apyr[2]$ instances are stacked along diagonals.
\end{example}

\begin{figure}[H]
\begin{tabular}{cc}
\includegraphics[width=0.35\linewidth]{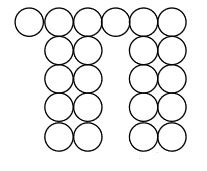}
&\includegraphics[width=0.35\linewidth]{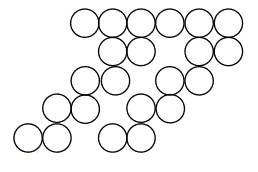}\\
({\bf a}) Vertical roots&({\bf b}) Diagonal roots\\
\end{tabular}
\caption{Roots in $\apyr[6]$. We show all activated vertices except those above the primary vertices of $\apyr[6].$}
\label{fig:roots}
\end{figure}

Given $K$ and $R$,  we define a \emph{$KR$-triangular sequence} as a sequence with two indexes $\{s_{i}^j\}$, where the coefficients are integers and they satisfy the constraints

\begin{equation}\label{eq:triangular-sec}
\scriptsize
\begin{array}{ccccccccccccc}
K+R&&K+R-1&&&&K+i&&&&K+2&&K+1\\
\rotatebox[origin=c]{90}{$\leq$}&&\rotatebox[origin=c]{90}{$\leq$}&&&&\rotatebox[origin=c]{90}{$\leq$}&&&&\rotatebox[origin=c]{90}{$\leq$}&&\rotatebox[origin=c]{90}{$\leq$}\\
s_{R}^{R}&>&s_{R-1}^{R-1}&>&\cdots&>&s_{i}^{i}&>&\cdots&>&s_{2}^{2}&>&s_{1}^1 \\
\rotatebox[origin=c]{90}{$\leq$}&&\rotatebox[origin=c]{90}{$\leq$}&&&&\rotatebox[origin=c]{90}{$\leq$}&&&&\rotatebox[origin=c]{90}{$\leq$}&&\rotatebox[origin=c]{90}{$\leq$}\\
R&&s_{R}^{R-1}&>&\cdots&>&s_{i+1}^{i}&>&\cdots&>&s_{3}^{2}&>&s_{2}^1\\
&&\rotatebox[origin=c]{90}{$\leq$}&&&&\rotatebox[origin=c]{90}{$\leq$}&&&&\rotatebox[origin=c]{90}{$\leq$}
&&\rotatebox[origin=c]{90}{$\leq$}\\
&&R-1&&&&\vdots&&&&\vdots&&\vdots\\
&&&&&&\rotatebox[origin=c]{90}{$\leq$}&&&&\rotatebox[origin=c]{90}{$\leq$}&&\rotatebox[origin=c]{90}{$\leq$}\\
&&&&\cdots&>&s_{R}^{i}&&\cdots&>&s_{j+1}^{2}&>&s_{j}^1\\
&&&&&&\rotatebox[origin=c]{90}{$\leq$}&&&&\rotatebox[origin=c]{90}{$\leq$}&&\rotatebox[origin=c]{90}{$\leq$}\\
&&&&&&i&&&&\vdots&&\vdots\\
&&&&&&&&&&\rotatebox[origin=c]{90}{$\leq$}&&\rotatebox[origin=c]{90}{$\leq$}\\

&&&&&&&&&&s_{R}^{2}&>&s_{R-1}^1\\
&&&&&&&&&&\rotatebox[origin=c]{90}{$\leq$}&&\rotatebox[origin=c]{90}{$\leq$}\\
&&&&&&&&&&2&&s_{R}^1\\
&&&&&&&&&&&&\rotatebox[origin=c]{90}{$\leq$}\\
&&&&&&&&&&&&1
\end{array}
\end{equation}

\begin{remark}\label{column}
The particular relation $s_{j}^i>s_{j-1}^{i-1}$  prevents redundancy of the roots.
\end{remark}

\begin{proposition}\label{prop:geom}
Consider a chinampa with multiplicity roots $n=3K$, where each root with $R$ copies $\apyr[2]$. The number of $KR$-triangular sequences $\{s^i_j\}$ counts the number of possible roots on $\apyr[n]$ with the previous conditions.
\end{proposition}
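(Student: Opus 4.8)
The plan is to prove the proposition by producing an explicit bijection $\Phi$ between the possible root systems on $\apyr[n]$, $n=3K$ (each root a chain of $R$ copies of $\apyr[2]$ stacked below the base, as in Theorem~\ref{thorem:TCC}), and the $KR$--triangular sequences $\{s_j^i\}$ constrained by~\eqref{eq:triangular-sec}. Fix coordinates so that the base of $\apyr[n]$ is the lowest row. By Remark~\ref{rmk:right-construction} the roots only grow downward and to the left, and each successive $\apyr[2]$ in a root either drops straight down or down-left --- these are exactly the two moves illustrated by the two extremes of Figure~\ref{fig:roots}. I would record, for each root and each depth, the column reached after the corresponding number of steps; these columns are the proposed values $s_j^i$, with the all-down and all-left configurations of Figure~\ref{fig:roots} serving as the two extremal sequences that normalize the labelling. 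Since the base has width $3K$, it accommodates the roots with exactly the horizontal room measured by $K$, which is where the constant $K$ in the bounds will come from.

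The core of the argument is to match, term by term, the data of an admissible configuration with the three families of relations in~\eqref{eq:triangular-sec}. The weak inequalities running down each column express that the recorded column is monotone in the depth, i.e.\ that each root is a legitimate chain of $\apyr[2]$'s moving down or down-left. The bounds $i\le s_j^i\le K+i$ bordering the array express that the whole system stays on the underlying path and inside the ambient pyramid: it can neither run off the left end nor cross the right edge. The strict diagonal relations $s_j^i>s_{j-1}^{i-1}$ express non-redundancy: two neighbouring roots may never become adjacent, since by Remark~\ref{rmk:two-pyrs} and Remark~\ref{rmk:3-staking} adjacency would force them to merge into a larger pyramid and fill the gap, violating non-redundancy. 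This is the content anticipated in Remark~\ref{column}, and I would verify it by induction on the depth, noting that descending one level changes the gap between two consecutive roots by the difference of their drifts, so that keeping every gap strictly positive at every level is precisely the condition that the strictly decreasing diagonals never collide.

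Granting these correspondences, $\Phi$ sends an admissible configuration to a genuine $KR$--triangular sequence. For the inverse I would reconstruct a configuration from a sequence $\{s_j^i\}$ by laying down, diagonal by diagonal and level by level, the $\apyr[2]$ whose recorded column equals the prescribed value: the border bounds make the pieces fit inside $\apyr[n]$, the column inequalities make each chain a valid root, and the strict diagonal inequalities guarantee (again by Remark~\ref{rmk:two-pyrs}) that no two roots ever touch, so the result is a non-redundant chinampa. That $\Phi$ and this reconstruction are mutually inverse is then a direct check, and the equality of the two cardinalities --- hence the proposition --- follows.

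The genuine obstacle is the bookkeeping in the encoding together with the non-redundancy equivalence: one must fix the canonical vertex and assign the superscript $i$ to roots and the subscript $j$ to depths so that non-redundancy translates into \emph{exactly} the strict relations $s_j^i>s_{j-1}^{i-1}$, neither stronger nor weaker, and so that the side conditions collapse to $i\le s_j^i\le K+i$ with precisely these constants. Closely related is surjectivity: one must show these inequalities are not only necessary but sufficient to avoid every adjacency at all $R$ levels at once, so that each triangular sequence is actually realized by a configuration.
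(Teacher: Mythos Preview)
Your high-level plan --- produce an explicit bijection and match each family of inequalities in~\eqref{eq:triangular-sec} to a geometric constraint on the roots --- is the right one, and your reading of the strict diagonal relations as non-redundancy (Remark~\ref{column}) is exactly the paper's reading. The problem is the encoding you propose. You take $s_j^i$ to be the column of root $i$ at depth $j$, with ``superscript $i$ to roots and subscript $j$ to depths''. But there are $K$ roots (two in the example of Figure~\ref{fig:roots}, where $n=6$ and $K=2$), while the triangular array carries $j$ entries at depth $j$, hence $R(R+1)/2$ entries in all. Your scheme would produce a $K\times R$ rectangle of data, and there is no way to make that coincide with the triangle unless $K=R$; in the example $K=2$, $R=4$, so $8\neq 10$. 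This is not bookkeeping that can be fixed by relabelling: the very shape of~\eqref{eq:triangular-sec} already tells you that the $s_j^i$ cannot be root positions.

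What the paper actually encodes are the \emph{gaps}, not the roots. On a compressed board with $K+R$ columns and $R$ rows one places, in row $j$, white marks at the $j$ columns $s_j^1<\cdots<s_j^j$ and colours the remaining $K$ cells of $[1,K+j]$ black; so at every depth there are exactly $K$ black cells. The bridge to the actual root picture is then a ``$\times 3$'' expansion: each black cell is replaced by three consecutive cells (one white, two black), and the black pairs become the $K$ copies of $\apyr[2]$ at that depth. This is the step you are missing, and it is the reason the proposition singles out $n=3K$. With this encoding the weak vertical inequalities say that each gap drifts weakly left with depth, the strict diagonals $s_j^i>s_{j-1}^{i-1}$ force each root to move at most one unit left per step (equivalently, keep adjacent roots from merging), and the bounds $i\le s_j^i\le K+i$ are the board constraints. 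Your paragraph-by-paragraph interpretation of the inequalities is therefore morally right, but it attaches to the wrong objects; the growing number of entries with depth reflects the growing number of gaps, not a growing number of roots.
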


\begin{proof}
Given the $KR$-triangular sequence $\{s_j^i\}$, consider a rectangular board $B$ of $(K+R)$ columns and $R$ rows.

Step 1: We place a white mark at the cell of $B$, given by the intersection of row 1 and column $s^1_1$. 
Step 2: We place a white mark at the cell intersection of row 2 and column $s^1_2$ and another white mark at the cell intersection of row 2 and column $s^2_2$. Step $i$ requires us to place marks at cells $(i,s_j^i)$ with $j\leq i$.  
Now, for each $i$, we take the $i^{th}$ row and color all non-white cells in black from columns $1$ to $n+i$. 

To recover the roots of $\apyr[n]$, we substitute each black cell with three consecutive cells: one white and two black. The black cells are the activated vertices of the roots of $\apyr[n]$. The fact that each of the sequences $\{s^i_j\}_j$ is decreasing translates into a movement of the roots to the left. The condition $s_{j}^i>s_{j-1}^{i-1}$ appears because the roots can move only one unit to the left. The map from one black block to a white block with two black blocks prevents redundancy.
This assignment can be verified to be an isomorphism.
\end{proof}

\begin{example}
We examine the white spaces as shown in Figure \ref{fig:rootsw}. They correspond to the roots of Figure \ref{fig:roots}. \linebreak According to Proposition \ref{prop:geom}, the sequences corresponding to Figure \ref{fig:rootsw}a are

\[\begin{array}{cccc}
 &  &  & s_1^1	\\
 &  & s_2^2 & s_2^1	\\
 & s_3^3 & s_3^2 & s_3^1	\\
s_4^4 & s_4^3 & s_4^2 & s_4^1
\end{array} = \begin{array}{cccc}
 &  &  & 3	\\
 &  & 4 & 3	\\
 & 5 & 4 & 3 \\
6 & 5 & 4 & 3
\end{array}\]

For Figure \ref{fig:rootsw}b, they are

\[\begin{array}{cccc}
 &  &  & s_1^1	\\
 &  & s_2^2 & s_2^1	\\
 & s_3^3 & s_3^2 & s_3^1	\\
s_4^4 & s_4^3 & s_4^2 & s_4^1
\end{array} = \begin{array}{cccc}
 &  &  & 1	\\
 &  & 2 & 1	\\
 & 3 & 2 & 1	\\
4 & 3 & 2 & 1.
\end{array}\]
\end{example} 

\begin{figure}[H]
\begin{tabular}{cc}
\includegraphics[width=0.45\linewidth]{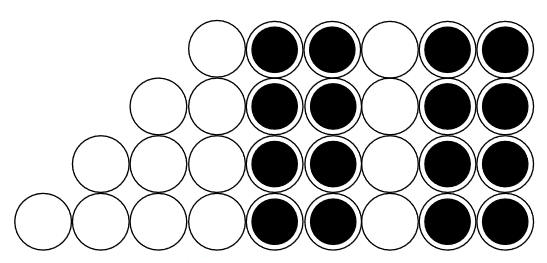}
&\includegraphics[width=0.45\linewidth]{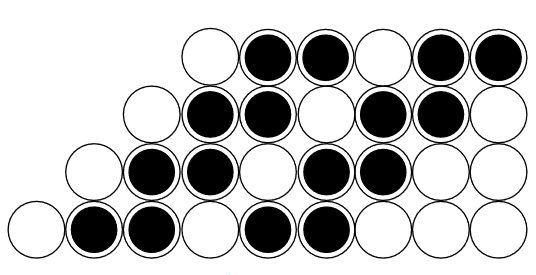}\\
({\bf a})  &({\bf b}) \\
\end{tabular}
\caption{The corresponding triangular sequences come from counting blocks. ({\bf a}) Vertical roots. \linebreak ({\bf b}) Diagonal roots.}
\label{fig:rootsw}
\end{figure}

\subsection*{Ehrhart Series and Order Polynomials}

The order polynomial $\Omega(P, x)$ of a partially ordered set (poset) $P$ was introduced by Stanley~\cite{beginning}. The polynomial evaluated on $n$ returns the number of labels $\Omega(P, n)$ on the poset $P$, which preserves the order. 

Similar to the construction of $T_{R, n}$, we can construct a poset $P_{R}$ containing only the symbol $\geq$ by subtracting $i-1$ units from the column $i$ from right to left, where $i\in[1,\cdots, R]$. The poset only depends on the variable $R$ and not on the variables $k$ and $n$:

\begin{lemma} \label{prop:geom2}
Consider chinampas with roots of multiplicity $n=3K$, where each root with $R$ is a copy of $\apyr[2]$. Then, the number of such chinampas is $\Omega(P_{R}, k+1)$.
\end{lemma}

\begin{proof}
The number of triangular sequences $T_{R, n}$ is $\Omega(P_{R}, k+1)$.
\end{proof}

This connection with combinatorics allowed us to determine the properties of the generating functions. For a poset $P$, one can associate the order polytope~\cite{computingd, crt} $Poly(P)$. Then, the generating function of the order polynomial is the variable $x$ times the Ehrhart series. For example, when $R=3$, the triangular sequence corresponds to the poset $\{a<b<c<d<e, b<f<d\}$, and we obtain the generating function $\frac{-x}{(1-x)^6}+2\frac{x}{(1-x)^{7}}$.

Ehrhart series of order polytopes are known to be of the form $\frac{h^*(x)}{(1-x)^{d+1}},$ or in our case $ d=\frac{R(R+1)}{2}$. The term $h^*(x)$ is a polynomial of a degree of at most $d$, where its coefficients satisfy the Dehn--Sommerville equations and are unimodal.

The previous result relates the enumeration of chinampas with polytopes of the form $Poly(P_{R})$. 

We counted chinampas with the help of Mathematica~\cite{Mmica} and a topological version of the calculus of species~\cite{poset}. These calculations gave us evidence of the validity of Lemma~\ref{prop:geom2} and led to the concept of triangular sequences. For details on the use of Mathematica for counting order polytopes, see~\cite{note}.

\section{Conclusions}

In this paper, we introduced activation diagrams, chinampas, and pyramids to study the effect of signals on the vertices of a nonlinear signal flow path. Furthermore, we demonstrated that pyramids are the simplest possible activation diagrams. Finally, we presented a deterministic algorithm to construct chinampas out of pyramids (see Remark~\ref{rmk:det}). 
Chinampas were conceived to serve as an idealized model for cascades, with sequences of neural spikes forming a polychrony group.

To support our conclusions, we developed an optimal code to answer the following introductory questions: ``Will a fixed vertex be activated at a particular time? Can we reconstruct all the vertices that will be activated?'' We also achieved the enumeration of chinampas of profits of zero and one. The problem of finding all chinampas of profits bigger than two remains open.  Our techniques for counting the chinampas of profits of zero and one cannot be adapted to this case, as the techniques miss a large family of elements (see Remark~\ref{rmk:prof-0-1}). We established that for a family of chinampas represented by triangular sequences, their enumeration problem is equivalent to computing the Stanley-order polynomial of a family of posets. To the best of our knowledge, finding the order polytope or Erhart series of these posets remains an open problem in enumerative combinatorics. 

Modern algorithms aim to emulate the behavior of brain regions by simulating polychrony groups across multiple neurons~\cite{rep, reply}. Our approach differs in that we focus on a particular path of neurons and study the possible polychrony groups on that network.
 
 Our contribution to computational neuroscience is not only theoretical. For example, the algorithm included in \cite{izhikevich2006polychronization} and the software of \cite{ sixnetworks,leaky,neursynaptic,scalabel,neurogrid} each emulate multiple cascades in parallel. When studying individual cascades along a line or in a tree, the software evaluates each cascade with a computational complexity of $O(n^2)$, where $n$ is the number of neurons.  In comparison, our algorithms scale as $O(n)$ and $O(n^2)$ in the best and worst cases, respectively. Optimization is important, since brain-like hardware is known to perform poorly \cite{slow, cc}. We believe that our code can help to better understand the patterns of large polychrony groups efficiently.

 Our work is limited to the study of signal flow paths. Possible continuations of this work include modeling triple-spike timing-dependent plasticity~\cite{triple, tspike,twspike,twtspike}  by requesting at least three incoming input signals in order to activate a node. Inspired by~\cite{beyond},  a machine learning algorithm such as genetic algorithms, combined with our software, should produce an algorithm with input of experimental measurements of spikes and output of the most likely topology of the network. Another possibility is to study redundant polychrony groups, where  redundancy is applied to assure that software will work even if some components are damaged. Following \cite{noise}, we would like to introduce noise in the theory of chinampas. Perhaps polycrhony groups can be used to study sparse neural networks \cite{sparse1, sparse2,sparse3,sparse4, sparse5} when the NN uses a sigmoid activation function which is equivalent to our nonlinearity condition for the signal flow graphs. In relation to  the theory of species~\cite{gf, species}, the first and second author are currently developing a topological version of species~\cite{poset}. Topology is needed because our generating functions are parameterized by posets, as in Lemma~\ref{prop:geom2}. Finally, we believe it may be of interest to study signal flow graphs that admit cycles according to Figure~\ref{fig:pyramid}. The feedback enables the existence of perpetual chinampas (see ~\cite{chaos}), in which the feedback is used to encode messages.
\vspace{6pt} 


{The first author  received funding from 
a National Research Foundation of Korea (NRF) grant funded by the Korean government (MSIT) (No. 2020R1C1C1A01008261).}

{We thank Jade Master for clarification of the relationship between Petri nets and chinampas. We thank the reviewers for their comments and suggestions that helped to improve this work. The cellular automata drawings were made using \url{http://madebyevan.com/fsm/} ({accessed on 2023/03/03}). 
Figure \ref{fig:basis} was made with \url{https://playgameoflife.com/} ({accessed on 2023/03/03}).}

{The authors declare no conflict of interest.}



\printbibliography

\newpage
\end{document}